\newtheorem{theorem}{Theorem}[section]
\newtheorem{lemma}[theorem]{Lemma}
\newtheorem{conjecture}[theorem]{Conjecture}
\newtheorem{observation}[theorem]{Observation}
\newenvironment{proof}[1][Proof:]
{\begin{trivlist}
\item[\hskip \labelsep {\bfseries #1}]} {\qquad\hspace*{\fill}$\square$\end{trivlist}}
        {\qquad\hspace*{\fill}$\square$\end{trivlist}}
 \newcounter{nona}[theorem]
\newcounter{nonanona}[theorem]    
\renewcommand{\thenona}{\Alph{nona}}
\newenvironment{noname}{\begin{trivlist}\item[]\refstepcounter{nona}%
        {\bf (\thenona)\ \ \ }\nobreak\noindent\sl\ignorespaces}{%
        \ifvmode\smallskip\fi\end{trivlist}}
\newcommand{\bB}{\mathcal{B}}
\newcommand{\B}{\mathfrak{B}}
\renewcommand{\S}{\mathscr{S}}
\newcommand{\T}{\mathcal{T}}
\newcommand{\Z}{\mathbb{Z}}
\newcommand{\cl}{\mathrm{cl}}
\newcommand{\un}{\mathrm{UN}}
\newcommand{\add}{\mathrm{ADD}}
\newcommand{\swap}{\mathrm{SWAP}}
\newcommand{\casc}{\mathrm{CASC}}
\newcommand{\cascgood}{\mathrm{CASC}^{\mathrm{good}}}
\renewcommand{\epsilon}{\varepsilon}
\title{Girth conditions and Rota's basis conjecture}
\author[1]{Benjamin Friedman}
\author[2,a,b]{Sean McGuinness}
\affil[1]{University of British Columbia}
\affil[2] {Thompson Rivers University}
\affil[a]{Corresponding author: smcguinness@tru.ca}
\affil[b]{Research supported by NSERC discovery grant}
\begin{document}

\maketitle
\begin{abstract}
Rota's basis conjecture (RBC) states that given a collection $\bB$ of $n$ bases in a matroid $M$ of rank $n$, one can always find $n$ disjoint rainbow bases with respect to $\bB$. In this paper, we show that if $M$ has girth at least $n-o(\sqrt{n})$, and no element of $M$ belongs to more than $o(\sqrt{n})$ bases in $\bB$, then one can find at least $n - o(n)$ disjoint rainbow bases with respect to $\bB$.
This result can be seen as an extension of the work of Geelen and Humphries, who proved RBC in the case where $M$ is paving, and $\bB$ is a pairwise disjoint collection. We make extensive use of the \emph{cascade} idea introduced by Buci\'c et al.

\vspace{.2in}

\noindent{\sl Keywords}\,:  Matroid, basis, base, Rota's basis conjecture, girth.

\bigskip\noindent
{\sl AMS Subject Classifications (2012)}\,: 05D99,05B35.
\end{abstract}

\section{Introduction}


For basic concepts and notation pertaining to matroids, we follow Oxley \cite{Oxley}. Let $M$ be a matroid of rank $n$. A {\bf base sequence} of $M$ is an $n$-tuple $\bB = (B_1, \ldots B_n) \in \B(M)^n$ of bases of $M$, where for each $i \in \{1,2, \ldots, n\}$, we think of the base $B_i$ as ``coloured'' with colour $i$. A {\bf rainbow base} ({\bf RB}) with respect to $\bB$ is a base of $M$ that contains exactly one element of each colour. Rainbow bases with respect to $\bB$ are said to be {\bf disjoint} if for each colour $c$, the representatives of colour $c$ are distinct. We let $t_M(\bB)$ denote the cardinality of a largest set of disjoint rainbow bases with respect to~$\bB$, where the subscript is dropped when $M$ is implicit.
In 1989, Rota made the following conjecture, first communicated in \cite{HuangRota}:

\begin{conjecture}[Rota's Basis Conjecture (RBC)]\label{conj_rbc}
Let $M$ be a matroid of rank $n$, and let $\bB$ be a base sequence of $M$. Then $t(\bB)=n$.
\end{conjecture}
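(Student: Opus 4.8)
The plan is to attack RBC along two fronts — an algebraic reduction for representable matroids and a combinatorial exchange (``cascade'') argument in general — while being candid that I expect neither front to close completely; this is precisely why the body of the paper settles for a partial statement under extra girth and multiplicity hypotheses.

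For a matroid $M$ representable over a field of characteristic zero (or sufficiently large characteristic), I would follow Huang--Rota and Onn. Represent each $B_i$ by its coordinate vectors and form the polynomial whose monomials record a choice of one element from each colour; a nonzero evaluation corresponds exactly to $n$ disjoint rainbow bases, and one reduces the nonvanishing to the Alon--Tarsi conjecture for the appropriate even order (the parity count of even versus odd Latin squares), with $n$ odd reducing to an even case. Since the Alon--Tarsi conjecture is known only for a sparse set of orders (Drisko, Glynn), this route — even pushed to its limit — would prove RBC only for vector matroids and only for those ranks, leaving non-representable matroids untouched.

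For an arbitrary matroid I would run a cascade in the spirit of Buci\'c et al. Take a maximal family $R_1,\dots,R_k$ of disjoint rainbow bases together with the multiset $U$ of leftover elements. If $k<n$ then $U$ spans $M$ but is not rainbow, so some colour is surplus and some colour is deficient in $U$, and the goal is to repair this and increase $k$. I would build a bounded-depth cascade: a tree of legal basis exchanges ($\swap$ steps), justified by the symmetric and serial exchange axioms, that inserts a deficient-colour element into some $R_i$, reroutes the displaced element through a chain of exchanges across the other $R_j$, and finally returns a surplus-colour element to $U$. The crux of the local argument is to choose the exchanges so that the cascade terminates without creating a cyclic colour dependency; Geelen and Humphries' proof of RBC for paving matroids with pairwise disjoint $\bB$ can be read as the cascade succeeding in an especially rigid situation.

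The main obstacle — and the reason RBC remains open — is the endgame of the cascade. The auxiliary exchange graph that governs the reroutings need not be connected in the way required when the bases of $\bB$ are highly entangled and a single element lies in many of them, so the cascade can stall a bounded (or $o(n)$) number of bases short of $n$. Taming this seems to demand either a girth hypothesis — large circuits force exchanges to be ``local'' and keep the exchange graph well-structured — or a multiplicity bound limiting how many colours a single exchange perturbs; imposing both simultaneously lets the cascade be driven up to $n-o(n)$, which is the theorem proved here. Showing that the cascade always reaches $k=n$ without any such hypothesis is the heart of the difficulty, and I have no general method for it.
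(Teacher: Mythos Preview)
The statement you were asked to prove is Conjecture~\ref{conj_rbc}, Rota's Basis Conjecture itself, and the paper does not prove it: it is stated as an open conjecture and the paper's contributions are the partial results of Theorems~\ref{thm_main_disjoint} and~\ref{the_main_overlapping}. So there is no ``paper's own proof'' to compare against, and you were right not to pretend otherwise.

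Your write-up is not a proof but an informed discussion of why the conjecture resists the two natural attacks, and on that level it is accurate. The algebraic route through Alon--Tarsi is correctly summarized, including its limitation to representable matroids and to the ranks where Alon--Tarsi is known. The combinatorial route via cascades is described at roughly the right level of abstraction, and your diagnosis of the obstruction --- that the exchange/cascade process can stall when bases are entangled, and that girth and overlap hypotheses are exactly what keep it moving --- matches the paper's own motivation for imposing those hypotheses. One small correction: the paper's cascade machinery does not literally build a tree of exchanges or rely on ``serial exchange axioms''; it works with roots, addable and swappable elements, and a linear chain of root transitions, with the girth bound forcing large swap sets and the $\kappa$-overlap bound controlling how many colours a single matroid element can carry. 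But this is a difference of implementation, not of philosophy.

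In short: there is no gap to name because you did not claim a proof; your proposal is an honest and essentially correct account of why RBC is open and how the paper's hypotheses buy the $n-o(n)$ bound.
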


We say that a base sequence $\bB = (B_1, \ldots B_n)$ is {\bf disjoint} if the bases $B_1, B_2, \ldots, B_n$ are pairwise disjoint. The above conjecture remains unsolved even in the case where $\bB$ is a disjoint sequence.  Due to the work of Drisko \cite{Drisko}, Glynn \cite{Glynn}, and Onn \cite{Onn} on the Alon-Tarsi conjecture \cite{Alon&Tarsi}, RBC is known to be true for $\mathbb{F}$-representable matroids of rank $p \pm 1$, where $\mathbb{F}$ is a field of characteristic $0$, and $p$ is an odd prime. See \cite{Friedman&McGuinness} for an overview of these results.
Chan \cite{Chan} and Cheung \cite{Cheung} proved RBC for matroids of rank $3$ and $4$, respectively. Wild \cite{Wild} proved the conjecture for \emph{strongly base-orderable matroids}.

One approach to RBC is to determine lower bounds on $t(\bB)$. This approach was taken by Geelen and Webb \cite{Geelen&Webb}, who proved that $t(\bB) \ge \sqrt{n-1}$, Dong and Geelen \cite{Dong&Geelen}, who showed that $t(\bB) \ge \frac n{7\log n}$, and most recently by Buci\'c et al. \cite{Bucic}, who proved that $t(\bB) \geq (1/2-o(1))n$.  We mention also an interesting recent result in \cite{Pokrovskiy} where it is shown that one can find at least $n-o(n)$ rainbow independent sets of size at least $n-o(n).$  Finding better bounds for $t(\bB)$ seems difficult.  In this paper, our goal is to show that the bound $t(\bB) \geq n - o(n)$ can be achieved for matroids of large girth and sequences of bases $\bB$ with small overlap.  

The \textbf{girth} $g(M)$ of a matroid $M$ is the length of a smallest circuit in $M$, where $g(M) = \infty$ if $M$ has no circuits.  A matroid $M$ is said to be \textbf{paving} if the girth of $M$ is at least the rank of $M$.  In \cite{Geelen&Humphries}, Geelen and Humphries proved Conjecture~\ref{conj_rbc} for paving matroids, in the case where $\bB$ is a disjoint sequence.
In the spirit of this work, we are interested in obtaining lower bounds for $t(\bB)$, for both disjoint and general base sequences, assuming large girth.  To obtain these bounds, we adapt the recent methods found in \cite{Bucic}.  Our first main theorem concerns the case when $\bB$ is disjoint:

\begin{theorem}\label{thm_main_disjoint}
Let $M$ be a matroid of rank $n$, and girth $g \geq n - \beta(n) + 1$, where $\beta:\mathbb{Z}^+ \rightarrow \mathbb{Z}^+$ is a positive integer function. If $\bB$ is a disjoint base sequence of $M$ and $n\ge 4\beta^2 + 7\beta +5$, then $t(\bB) \geq n-4 \beta(n)^2-7\beta(n) -4$. 
\end{theorem}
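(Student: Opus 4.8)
The plan is to follow the cascade strategy of Buči\'c et al.\ \cite{Bucic}, exploiting the large girth to force the algorithm to behave almost perfectly. First I would set up the basic framework: start with the $n$ "monochromatic" bases $B_1,\dots,B_n$ as a candidate collection of $n$ disjoint independent sets (each $B_i$ is a base, hence rainbow-able in colour $i$, but of course not a priori rainbow). The goal is to repeatedly perform \emph{swaps} — exchanging elements of two colour classes along a circuit — so as to gradually make the sets rainbow, using a cascade to amortize the cost of failed swaps. In the disjoint case the bookkeeping is considerably simpler than in the general case, and the girth hypothesis is what makes the argument go through with only $O(\beta^2)$ loss rather than the constant-factor loss of \cite{Bucic}.

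The key point is a girth-based structural observation: if $M$ has girth $g \ge n - \beta + 1$ and $I$ is an independent set of size $n$ that is "almost" a base in the sense of missing only a few colours, then any circuit used in a swap has at least $g$ elements, so any exchange touches at most $n - g + 1 \le \beta$ "new" elements. This severely limits how much damage a single swap can do, and — crucially — it means that the obstructions to completing a rainbow base are concentrated on a set of size $O(\beta)$. Concretely, I would argue that at any stage of the cascade, the set of colours that are "blocked" (cannot be fixed by a cheap swap) has size $O(\beta)$, because a blocked colour corresponds to a small circuit, and small circuits have size $\ge g > n - \beta$, leaving little room. The cascade then either terminates successfully on a given pair of sets or certifies such a small obstruction; since each obstruction costs us only $O(\beta)$ colours and there are at most $O(\beta)$ of them to worry about per set, the total number of sets we fail to rainbow-complete is $O(\beta^2)$ — matching the claimed bound $n - 4\beta^2 - 7\beta - 4$ after one tracks constants.

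In more detail, the steps would be: (1) reduce to the case where each $B_i$ is regarded as a colour class to be made rainbow, and define the cascade exactly as in \cite{Bucic}, with the "good" cascade variant $\cascgood$ used to guarantee that failed steps release elements back into the pool; (2) prove the girth lemma: in a matroid of girth $g$, for independent sets of near-maximal size, every fundamental circuit relative to a near-base has $\ge g$ elements, hence every swap is "local" (bounded support $\le n - g + 1$); (3) run the cascade and show that it terminates, on each target set, either with a completed rainbow base or with a witness of a small ($\le \beta$-element) circuit-type obstruction; (4) use a counting/potential argument — each unit of obstruction is "paid for" against the girth slack $\beta$, and the total slack across all the sets is bounded — to conclude that all but $4\beta^2 + 7\beta + 4$ of the $n$ sets become rainbow bases; (5) verify the numeric hypothesis $n \ge 4\beta^2 + 7\beta + 5$ is exactly what is needed to keep the cascade from running out of colours or elements partway through.

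The main obstacle I expect is step (3)–(4): controlling how the obstructions accumulate \emph{across different target sets simultaneously}. A swap that helps set $i$ may create a new small circuit affecting set $j$, and one must show this cascading does not blow up. The resolution should be that, by the girth lemma, every newly created obstruction again has the "size $\ge g$" property, so the obstructions live in a bounded-dimensional "quotient" (roughly, $M$ restricted to the $O(\beta)$-element symmetric difference of the current configuration from the monochromatic one), and there is only so much such structure to go around. Making this precise — essentially a rank/exchange argument showing the union of all obstruction-supports has rank $O(\beta)$, so at most $O(\beta)$ sets can be simultaneously blocked, and each blocked set costs $O(\beta)$ — is the technical heart of the proof, and is presumably where the constants $4, 7, 4$ come from.
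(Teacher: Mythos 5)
Your sketch captures the broad atmosphere of the argument (cascades plus girth) but misses the actual mechanism, and as written it has several gaps that I do not think can be repaired along the lines you suggest.

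First, the starting configuration is wrong. You propose to begin with the monochromatic bases $B_1,\dots,B_n$ and ``gradually make them rainbow.'' The paper instead fixes $\eta = n - \alpha$ (with $\alpha = 4\beta^2+7\beta+2$) and works throughout with an $\eta$-maximal collection $\S \in \mathbb{S}_\eta(\bB)$, i.e.\ a collection of at most $\eta$ disjoint rainbow independent sets whose size-signature $\bm{\tau}(\S)$ is lexicographically maximal. The entire argument is a contradiction against this maximality; the lexicographic signature is the potential function. There is no ``cascade that makes progress on monochromatic sets'' — nothing in the paper's machinery (or in Buci\'c et al.) operates on non-rainbow sets, so your step (1) does not match the framework and would have to be rebuilt from scratch.

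Second, you invoke the girth in the wrong direction. You say a large circuit means ``any exchange touches at most $n - g + 1 \le \beta$ new elements,'' an upper bound on damage. The paper uses girth for a \emph{lower} bound: if $(y,c_i)$ is unused and $S_\ell - (x_i,c_i) + (y,c_i)$ is not an RIS, then the circuit in $\underline{S}_\ell - x_i + y$ has $\ge n-\beta$ elements of $\underline{S}_\ell$, so at least $n-\beta$ elements of $S_\ell$ are swappable, hence (via Lemma~\ref{lem_swappable} and~\ref{lem_injection}) at least $n-\beta$ elements are addable. This abundance feeds a pigeonhole count in Lemma~\ref{lem-klemma}: if $t(\bB)$ were too small, iterating on $k'$ cascadable elements concentrated in one set produces $k'(n-\beta)$ addable elements spread over only $n-\alpha-\ell-1$ other sets, forcing either a strictly larger concentration or a contradiction. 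That pigeonhole step is the engine of the proof, and nothing resembling it appears in your sketch.

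Third, your proposed resolution of steps (3)–(4) — a rank argument showing ``the union of all obstruction-supports has rank $O(\beta)$'' — is not the argument and I see no reason it would close the gap. The actual endgame is Lemma~\ref{lem-exchange}: once Lemma~\ref{lem-klemma} delivers $2\beta+1$ cascadable elements in $S_\ell$, one picks a small set $S_{\ell+1}$, uses girth to find $\beta+1$ matching-coloured elements in $S_{\ell+1}$, and uses girth again plus Lemma~\ref{lem-exchange} to find a nonempty $I \subseteq [\beta+1]$ for which the multi-element swap $S_\ell - \{(x_i',c_i): i\in I\} + \{(x_i,c_i): i\in I\}$ is an RIS, strictly increasing the signature. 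Finally, the whole proof has to be run separately for $\eta$-maximal and $\eta$-submaximal collections (the latter arising when $t_{n-1}=0$), with Observation~\ref{obs2} controlling set sizes in the submaximal case; this bookkeeping, which is where the constants $4,7,4$ actually come from, is absent from your outline. In short, you have identified the theme but not the lemma structure, the potential function, or the concrete contradiction, and the specific replacement argument you propose (obstruction-rank bound) is not supported.
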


Given a positive integer function $\kappa: \mathbb{Z}^+ \rightarrow \mathbb{Z}^+$, we say that a base sequence $\bB$ of a rank $n$ matroid $M$ is $\pmb{\kappa}${\bf -overlapping} if no matroid element $e \in E(M)$ is contained in more than $\kappa(n)$ of the bases in $\bB$.
In this paper, we show that if 
$M$ is a $o(\sqrt{n})$-overlapping matroid of rank $n$ with girth $g \ge n - o(\sqrt{n}),$ then $t(\bB) \ge n - o(n).$  More specifically,  we prove the following theorem.

\begin{theorem}\label{the_main_overlapping}
Suppose $\beta$ and  $\kappa$ are such that $\beta(n) \sim o(\sqrt{n})$ and $\kappa(n) \sim o(\sqrt{n})$.
Let $M$ be a matroid of rank $n$ and girth $g \geq n - \beta(n) + 1$. If $\bB$ is a $\kappa$-overlapping base sequence of $M$ and $n > 2((2\kappa(n) + 2\beta(n) +1)^2 + \beta(n))$, then $t(\bB) \ge n - (2\kappa(n) + 2\beta(n) +1)^2 - \beta(n) -2$. 
\end{theorem}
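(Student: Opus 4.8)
The natural first thought — clone each overlapping element into parallel copies to turn $\bB$ into a disjoint sequence and then quote Theorem~\ref{thm_main_disjoint} — fails: making the clones parallel destroys the girth hypothesis, while making them non-parallel breaks the correspondence between rainbow-base packings of the modified matroid and of $M$ (a single rainbow base could use both an element and its clone, which collapses to a repeated element after reidentification). So I would instead prove Theorem~\ref{the_main_overlapping} directly, adapting the cascade machinery (the operations $\add$, $\swap$ and the structures $\casc$, $\cascgood$) that underlies the proof of Theorem~\ref{thm_main_disjoint}. Set $s := (2\kappa(n)+2\beta(n)+1)^2 + \beta(n) + 2$, so the goal is $n-s$ pairwise disjoint rainbow bases. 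Maintain a family $I_1,\dots,I_n$ of pairwise disjoint rainbow independent sets (the $j$-th is the $j$-th rainbow base under construction; start, say, from all $I_j=\emptyset$) together with the bookkeeping that, for each colour $c$, the colour-$c$ representatives across the $I_j$'s are distinct. Repeatedly augment: for an $I_j$ that is not yet a base, choose a colour $c$ absent from $I_j$, and either $\add$ an available element of colour $c$ to $I_j$, or, if every element of colour $c$ lies in $\cl(I_j)$, invoke a $\swap$ pushing an element out of some $I_k$ and chain the swaps into a $\casc$ that reroutes an element into $I_j$. The girth hypothesis localizes failures: since $g \ge n-\beta+1$, any independent set of size at most $n-\beta$ has closure of rank equal to its size, so an $I_j$ can be \emph{blocked} (all colour-$c$ elements spanned) only when $|I_j| \ge n-\beta$, i.e.\ when it is within $\beta$ of a base — this is what ties the loss to $\beta$.

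Next I would quantify how $\kappa$-overlapping controls the cascades. When $I_j$ is blocked, the cascade walks through colour classes, and the danger is that it re-enters a blocked configuration or cannot close up. Because $\bB$ is $\kappa$-overlapping, each element carries at most $\kappa$ colour labels, which bounds the out-degree of the ``conflict'' relation the cascade follows, and hence bounds the number of $I_j$'s that can be simultaneously blocked or swept into a single cascade by roughly $2\kappa + 2\beta + 1$. Following the ``good cascade'' analysis of Buci\'c et al.\ and of the proof of Theorem~\ref{thm_main_disjoint}, I would prove: whenever more than $s$ of the sets $I_j$ are not bases, there exists a $\cascgood$ that strictly increases $\sum_j |I_j|$ while preserving disjointness and the rainbow property. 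Since $\sum_j |I_j| \le n^2$, the process terminates, and at termination at most $s$ sets fail to be bases, giving $t(\bB)\ge n-s$. The hypothesis $n > 2((2\kappa+2\beta+1)^2+\beta) = 2(s-2)$ is precisely what guarantees at every stage that enough colours and enough room survive: a blocked $I_j$ of size $\ge n-\beta$ must still admit a colour $c$ all of whose elements are available to be rerouted, and the cascade must have room to deposit displaced elements into sets that are themselves far from blocked.

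The main obstacle is this last termination/threshold step — pinning down the exact quadratic bound $s = (2\kappa(n)+2\beta(n)+1)^2 + \beta(n) + 2$. The quadratic shape should arise because one cascade launched from a blocked $I_j$ can itself pass through up to $\approx 2\kappa+2\beta$ further blocked sets before closing, and there can be up to that many ``layers'' of such events, so a counting argument over a chain of both depth and width $O(\kappa+\beta)$ yields an $O((\kappa+\beta)^2)$ bound; the delicate part is defining ``good'' carefully enough (no re-blocking, no illegal reuse of an element or colour, correct parity/length of the swap chain) that a good cascade provably exists above the threshold. The girth hypothesis is used twice in tension — once to confine blockages to nearly-full sets (the additive $\beta$) and once inside the cascade to ensure the rerouted sets stay independent (keeping circuits, which have length only $\ge n-\beta+1$, from interfering) — and reconciling these two uses while keeping the total loss at exactly $s$ is where the careful accounting lies.
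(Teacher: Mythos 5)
Your high-level strategy matches the paper's: you correctly reject the cloning approach, correctly identify that the cascade machinery must be adapted, and correctly intuit that the quadratic threshold comes from a depth-times-width $O((\kappa+\beta)^2)$ count over cascade chains. But the proposal stops short of a proof precisely at the steps you flag as ``the main obstacle'' and ``the delicate part,'' and those are where the real content lies.

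Concretely, three gaps. First, the paper's central new device is the bad-to-good root transformation: a root $(\S,S,b)$ is \emph{good} if some unused element $(y,b)$ has $y\notin\underline{S}$, and the paper builds a BFS-style graph $G(\S,S,b)$ whose levels grow by a factor $\alpha/\kappa>1$ (Lemma~\ref{lem-levelbound}), guaranteeing a terminal vertex and hence a recolouring of $S$ into an $S'$ with $\underline{S'}=\underline{S}$ missing a colour $b'$ for which $(\S',S',b')$ is good. Your proposal never describes this mechanism; it only gestures at ``defining good carefully enough.'' Without it, the girth condition gives you nothing in the overlapping setting, because blocked means all colour-$c$ elements lie in $\cl(\underline{S})$, and with overlap this can happen without any usable circuit.

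Second, your termination argument---``a good cascade strictly increases $\sum_j|I_j|$, which is bounded by $n^2$''---is not what the paper does and is not obviously correct. When a cascadable element lands inside an existing set $S_\ell\in\S$ (the typical case in an $\eta$-maximal collection, by Observations~\ref{obs1}--\ref{obs2}), the cascade only rearranges elements and does \emph{not} increase the total size. The paper instead works with $\eta$-truncated collections, the lexicographic signature $\bm{\tau}$, and the $\eta$-maximal/$\eta$-submaximal dichotomy; the contradiction is a strict increase in $\preccurlyeq$, not in $\sum|I_j|$. Your potential function would need to be the signature, not the sum.

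Third, the actual derivation of $s=(2\kappa+2\beta+1)^2+\beta+2$ is the sequence of counting claims built on Lemma~\ref{lem-klemma2} (analogue of Lemma~\ref{lem-klemma} for good cascades), Lemma~\ref{lem-qbound} (the $|\underline{S}\cap\underline{S'}|\ge q-2\beta$ overlap bound), and then the inequality $q(n-\beta-q^2)<\kappa n+2\beta(n-\alpha)$ leading to the threshold condition $\sqrt{\alpha-\beta}\ge\frac{\kappa n}{n-\alpha}+2\beta+1$, which the hypothesis $n>2\alpha$ guarantees. None of this counting is carried out in your proposal; it is precisely the step you defer. In short, you have the right roadmap and the right intuitions, but the proof is not there yet.
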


\section{Roots and cascades}

For a positive integer $k$, we will let $[k]$ denote the set $\{ 1, \dots ,k \}.$  Throughout this section, we will assume that $M$ is a matroid of rank $n$ and girth $g \geq n-\beta(n)+1$, where $\beta: \Z_+ \rightarrow \{ 0 \} \cup [n]$. We will further suppose that $\bB = (B_1, B_2, \ldots, B_n)$ is a base sequence of $M$. Following \cite{Bucic}, we define $U = \cup_{c=1}^n \left(B_c \times \{c\}\right) = \left\{ (x,c) \mid x \in B_c, 1 \leq c \leq n \right\}$ to be the set of ``coloured elements" and we define $\pi_1: U \rightarrow E(M)$ and $\pi_2: U \rightarrow [n]$ to be the projections $\pi_1(x,c)=x$ and $\pi_2(x,c)=c$.
For a subset $A \subseteq U,$ we let $\underline{A}$ denote the set $\pi_1(A) \subseteq E(M)$.  

\subsection{Collections of disjoint rainbow independent sets, signatures}

We say that a subset $S \subseteq U$ is a \textbf{rainbow independent set}, or \textbf{RIS}, if $\pi_1 \big| S$ and $\pi_2 \big| S$ are injections, and the set $\underline{S}$ is independent in $M$. Note that an RIS of size $n$ corresponds to a rainbow base (RB) with respect to $\bB$.

We define $\mathbb{S}(\B)$ to be the family of all collections $\S$ of disjoint RIS's. Given $\S \in \mathbb{S}(\B)$, 
we let $F(\S) := \cup_{S \in \S} S$ be the set of ``used elements". Given a colour $b$, we let $\un_b(\S) = \{ (x,b)\in U \ \big| \ (x,b) \not\in F(\S) \} $ denote the set of ``unused'' coloured elements with colour $b$.

Let $\S$ be any finite collection of sets where, for all $S\in \S,$ we have $|S| \le n.$
For all $i \in [n]$, we define $\tau_i(\S) = \left| \{ S \in \S\ \big| \ |S| = i \} \right|$.  Furthermore, we define a vector $\bm{\tau}(\S) = (\tau_1(\S), \dots ,\tau_n(\S)) \in \mathbb{Z}^n$, called the {\bf signature} of $\S$.  We shall define an total order $\preccurlyeq$
on the signatures of collections in $\mathbb{S}(\bB)$  using the lexicographic ordering of vectors in $\mathbb{Z}^n.$  That is, for vectors $(a_1, \dots ,a_n), (b_1,\dots ,b_n) \in \mathbb{Z}^n,$ we have the following 
recursive definition:
\begin{itemize}
\item For $n=1$, the order $\preccurlyeq$ is just the usual order $\leq$ on $\mathbb{Z}$.
\item For $n > 1$, we have $(a_1, \dots ,a_n) \preccurlyeq (b_1,\dots ,b_n)$ if and only if $a_n < b_n,$ or $a_n = b_n$ and $(a_1, \dots ,a_{n-1}) \preccurlyeq (b_1,\dots ,b_{n-1})$ in the ordering $\preccurlyeq$ on $\mathbb{Z}^{n-1}$.  
\end{itemize}

We say that a collection $\S \in \mathbb{S}(\bB)$ is {\bf maximal} if $\bm{\tau}(\S)$ is maximal with respect to $\preccurlyeq.$

For a collection $\S \in \mathbb{S}(\bB),$ where $\tau_n(\S) < |\S|,$  let $i^*(\S)$ be the largest size of a set in $\S$ which is not an RB.  In addition, assuming $\tau_i(\S) >0$ for some $i < i^*(\S),$ let $i^{**}(\S) = \max \{ i< i^*(\S) \ \big| \ \tau_i(\S) > 0 \}.$

For the most part, we will be looking at {\it truncated collections} in $\mathbb{S}(\bB);$ that is, collections having at most a fixed number of sets.
For a positive integer $\eta,$ let $\mathbb{S}_\eta(\bB) = \{ \S \in \mathbb{S}(\bB) \ \big| \ |\S| \le \eta \}.$  A collection $\S \in \mathbb{S}_\eta(\bB)$ is called $\bm{\eta}${\bf-maximal}
if $\bm{\tau}(\S)$ is maximal when $\preccurlyeq$ is restricted to signatures of collections in $\mathbb{S}_\eta(\bB).$



\subsection{Roots, addability and swappability}

We define a {\bf root} of $\bB$ to be a triple $(\S, S, b)$, where $\S \in \mathbb{S}(\bB),\;\; S \in \S$, and $b$ is a colour missing from $S$. Given a root $(\S, S, b)$, an element $(x,c) \in S$ is said to be $\pmb{(\S,S,b)}$-{\bf swappable} with {\bf witness} $(y,b) \in \un_b(\S)$, if the set $S - (x,c) + (y,b)$ is an RIS. The set of $(\S,S,b)$-swappable elements is denoted by $\swap(\S, S, b) \subseteq S$. An element $(x,c) \in U$ is said to be $\pmb{(\S,S,b)}$-{\bf addable} if either of the following is true:
\begin{itemize}
\item $S + (x,c)$ is an RIS, or
\item there is some $(y,b) \in \un_b(\S)$ and some $(x',c) \in S$ such that $S + (x,c) - (x',c) + (y,b)$ is an RIS.
\end{itemize}
In the former case, we say that $(x,c)$ is {\bf directly} $(\S,S,b)$-addable. In the latter case, we say that $(x,c)$ is {\bf indirectly} $(\S,S,b)$-addable with witness $(y,b).$  We denote the set of $(\S,S, b)$-addable elements by $\add(\S, S, b)$. 

Let $(\S,S,b)$ be a root and suppose $(x,c) \in \add(\S,S,b) \cap S_1,$ for some set $S_1\in \S - S.$  One can define a new root $(\S',S',c)$ as follows:   If $(x,c)$ is directly addable, then we define $T := S + (x,c).$  If $(x,c)$ is indirectly addable, and $(y,b) \in \un_b(\S)$ and $(x',c) \in S$ are elements such that $S + (x,c) - (x',c) + (y,b)$ is an RIS, then define $T:= S + (x,c) - (x',c) + (y,b)$.  Now define $S':= S_1 - (x,c).$  Then for $\S' = \S - \{ S,S_1\} + \{ S',T\},$  $(\S',S',c)$ is seen to be a new root. 
We denote the operation of transitioning from $(\S,S,b)$ to $(\S',S',c)$ by:
\[
(\S, S, b)\xrightarrow []{(x,c)}
(\S', S', c).\]
Note that since there are possibly several choices for $(y,b)$ in the above operation, the set $T$ and hence also the collection $\S'$ is not necessarily unique.  We will make use of the following two lemmas, which are adaptations of lemmas appearing in \cite{Bucic}:

\begin{lemma}
\label{lem_swappable}
Let $(\S, S, b)$ be a root. If $(x',c)$ is $(\S,S,b)$-swappable with a witness $(y,b)$, then either $(y,b)$ is directly $(\S,S,b)$-addable, or for all $x \in B_c - \cl_M(\underline{S})$, the coloured element $(x,c)$ is indirectly $(\S, S,b)$-addable with witness $(y,b)$.
\end{lemma}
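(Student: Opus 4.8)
The plan is a direct matroid-closure argument; note that the girth hypothesis plays no role in this lemma. Set $I := \underline{S}$, so $I$ is independent with $|I| = |S|$, and recall that colour $b$ is missing from $S$. First I would unpack the swappability hypothesis: since $S - (x',c) + (y,b)$ is an RIS, injectivity of $\pi_1$ on it forces $y \notin I \setminus \{x'\}$ (so either $y \notin I$, or $y = x'$), while independence of its underlying set says exactly that $J := (I \setminus \{x'\}) \cup \{y\}$ is independent of size $|I|$.

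If $(y,b)$ is directly $(\S,S,b)$-addable we land in the first alternative and are done, so assume $S + (y,b)$ is not an RIS. I would next show this forces $y \in \cl_M(I)$: this is immediate if $y = x'$ (as $x' \in I$), while if $y \notin I$ and also $y \notin \cl_M(I)$ then $I + y$ would be independent and — using $y \notin I$ for injectivity of $\pi_1$ and $b \notin \pi_2(S)$ for injectivity of $\pi_2$ — the set $S + (y,b)$ would be an RIS, a contradiction. Having $y \in \cl_M(I)$, I would then observe that $J$ is an independent set of size $|I|$ contained in $\cl_M(I)$, so by the standard fact that nested flats of equal rank coincide, $\cl_M(J) = \cl_M(I)$ (trivially so when $y = x'$, since then $J = I$).

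For the conclusion, fix any $x \in B_c - \cl_M(\underline{S})$. Since $x \notin \cl_M(I) = \cl_M(J)$, the set $J + x$ is independent; moreover $x$ differs from both $x'$ and $y$, as these lie in $\cl_M(I)$ while $x$ does not. A routine verification then shows that $T := S + (x,c) - (x',c) + (y,b)$ is an RIS: its underlying set is precisely $J + x$ (hence independent), the $\pi_1$-values on $T$ are the $|S|+1$ distinct elements of $J + x$ (so $\pi_1 \big| T$ is injective), and the colours occurring in $T$ are exactly $\pi_2(S) \cup \{b\}$, each once, the unique colour-$c$ element $(x',c)$ of $S$ having been replaced by $(x,c)$. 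Since $(y,b) \in \un_b(\S)$ and $(x',c) \in S$, this exhibits $(x,c)$ as indirectly $(\S,S,b)$-addable with witness $(y,b)$, as required.

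The only mildly delicate aspect is carrying the degenerate possibility $y = x'$ (equivalently $y \in I$) through each step so that injectivity of $\pi_1$ on the intermediate sets is never lost; once that bookkeeping is handled, the whole argument rests on the single matroid fact that two nested flats of equal rank are equal.
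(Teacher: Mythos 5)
The paper itself gives no proof of Lemma~\ref{lem_swappable}; it is stated without proof as an adaptation of a lemma from Buci\'c et al.~\cite{Bucic}, so there is no author proof to compare against. Your argument is correct and self-contained: you observe that non-direct-addability of $(y,b)$ forces $y \in \cl_M(\underline{S})$, that the independent set $J = (\underline{S} - x') + y$ then has the same closure as $\underline{S}$ (nested flats of equal rank coincide), and that any $x \in B_c - \cl_M(\underline{S})$ can therefore be added to $J$, yielding the required RIS $S + (x,c) - (x',c) + (y,b)$ with witness $(y,b)$. The bookkeeping for the degenerate possibility $y = x'$ (so $J = \underline{S}$) is handled correctly, and injectivity of $\pi_1$ and $\pi_2$ on the resulting set is verified cleanly.
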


\begin{lemma}\label{lem_injection}
Let $S \in \S \in \mathbb{S}(\bB)$. Then for any colour $c$, there exists an injection $\phi_c: \underline{S} \rightarrow B_c$ such that for all $x \in \underline{S}$, the set $\underline{S}-x + \phi_c(x)$ is independent.
\end{lemma}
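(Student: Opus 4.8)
The plan is to realise the injection $\phi_c$ as a matching that saturates $\underline{S}$ in an auxiliary bipartite graph, with Hall's condition verified through the rank function of $M$. Write $I := \underline{S}$, which is independent in $M$ (so $|I| \le n$), and $B := B_c$, which is a base of $M$, so $r_M(B) = n$. Let $G$ be the bipartite graph with parts $I$ and $B$ in which $x \in I$ is adjacent to $y \in B$ exactly when $I - x + y$ is independent in $M$. A matching of $G$ saturating $I$ is precisely an injection $\phi_c : I \to B$ with $I - x + \phi_c(x)$ independent for every $x \in I$, which is the assertion of the lemma.

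By Hall's theorem it suffices to prove $|N_G(A)| \ge |A|$ for every nonempty $A \subseteq I$. The crux is the inclusion
\[
B \setminus N_G(A) \;\subseteq\; \cl_M(I \setminus A).
\]
To see this, let $y \in B$ with $y \notin \cl_M(I\setminus A)$, so that $(I \setminus A) + y$ is independent. If $I + y$ is independent, then for any $x \in A$ the set $I - x + y \subseteq I + y$ is independent, so $y \in N_G(A)$. If $I + y$ is dependent, let $C$ be the unique circuit contained in $I + y$ (the fundamental circuit of $y$ with respect to $I$); since $(I \setminus A) + y$ is independent we have $C \not\subseteq (I \setminus A) + y$, and as $C \subseteq I + y = (I\setminus A) \cup A \cup \{y\}$ this forces $C \cap A \ne \emptyset$. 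Choosing $x \in C \cap A$, the set $I - x + y$ is independent, since any circuit it contained would have to be a second circuit of $I + y$. Hence $y \in N_G(A)$ in this case as well, establishing the inclusion.

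Granting the inclusion, $B = (B \cap N_G(A)) \cup (B \setminus N_G(A)) \subseteq N_G(A) \cup \cl_M(I \setminus A)$, so by submodularity of the rank function
\[
n \;=\; r_M(B) \;\le\; r_M\!\big(N_G(A) \cup \cl_M(I \setminus A)\big) \;\le\; |N_G(A)| + r_M(I \setminus A) \;=\; |N_G(A)| + |I| - |A|.
\]
Therefore $|N_G(A)| \ge |A| + (n - |I|) \ge |A|$ since $|I| \le n$, so Hall's condition holds, $G$ has a matching saturating $I$, and the associated map is the required $\phi_c$.

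I expect the only delicate point to be the case analysis behind the displayed inclusion — in particular the split according to whether $I + y$ is independent, together with the appeal to uniqueness of the fundamental circuit; the rest is a routine use of submodularity and of Hall's theorem. (One could instead extend $\underline{S}$ to a base of $M$ and invoke Brualdi's symmetric exchange theorem for pairs of bases, but the argument above is self-contained.)
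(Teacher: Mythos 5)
Your proof is correct. Note that the paper does not supply a proof of this lemma; it states that Lemmas~\ref{lem_swappable} and~\ref{lem_injection} are ``adaptations of lemmas appearing in'' the Buci\'c et al.\ paper and refers the reader there. The route most commonly taken in that literature is the one you flag in your parenthetical: extend $\underline{S}$ to a base $B'$ of $M$ and invoke Brualdi's bijective symmetric exchange theorem for the pair of bases $B'$ and $B_c$, then restrict the bijection to $\underline{S}$; the restriction of each exchange $B'-x+\phi(x)$ to $\underline{S}-x+\phi(x)$ is automatically independent. Your argument is a self-contained proof of exactly the needed fragment of Brualdi's theorem via Hall's condition, with the matching condition reduced to the inclusion $B_c \setminus N_G(A) \subseteq \cl_M(\underline{S}\setminus A)$ and then to a rank/submodularity count. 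The delicate step you identify is handled correctly: when $\underline{S}+y$ is dependent, uniqueness of the fundamental circuit $C(y,\underline{S})$ guarantees that deleting any $x\in C\cap A$ leaves $\underline{S}-x+y$ independent, and $C\cap A\ne\emptyset$ because $(\underline{S}\setminus A)+y$ is independent when $y\notin\cl_M(\underline{S}\setminus A)$. The trade-off between the two routes is the usual one: citing Brualdi gives a two-line proof at the cost of an external black box, while your Hall's-theorem argument is longer but entirely elementary and makes the structural reason for the matching visible.
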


\subsection{Root cascades and cascadable elements}

\begin{figure}
\begin{tikzpicture}[scale=1.75]
\draw[very thick] (0,0) circle(1);
\draw[very thick] (2.25,0) circle(1);
\draw (4,-0.6) node[right=1pt] {\Huge{$\cdots$}};
\draw[very thick] (6,0) circle(1);

\draw[very thick,dashed] (1,-0.4) arc(-90:90:1.25 and 0.4);
\draw[very thick,dashed] (1,-0.4) arc(340:296.5:1.2 and 1.1);
\draw[very thick,dashed] (0.41,-1.01) arc(0:-183:0.45 and 1);
\draw[very thick,dashed] (-0.48,-0.98) arc(-115:-230:1.1 and 1.07);
\draw[very thick,dashed] (-0.725,0.82) arc(200:336:0.8 and 0.5);
\draw[very thick,dashed] (0.725,0.77) arc(45:21.5:1.2 and 1.1);

\draw[very thick,dashed] (3.25,-0.4) arc(-90:90:1.25 and 0.4);
\draw[very thick,dashed] (3.25,-0.4) arc(340:296.5:1.2 and 1.1);
\draw[very thick,dashed] (2.66,-1.01) arc(0:-183:0.45 and 1);
\draw[very thick,dashed] (1.77,-0.98) arc(-115:-154:1.1 and 1.07);
\draw[very thick,dashed] (1.27,-0.5) arc(-90:90:1.15 and 0.5);
\draw[very thick,dashed] (1.27,0.5) arc(150:129:1.1 and 1.2);
\draw[very thick,dashed] (1.525,0.82) arc(200:336:0.8 and 0.5);
\draw[very thick,dashed] (2.975,0.77) arc(45:21.5:1.2 and 1.1);

\draw[very thick,dashed] (7,-0.4) arc(-90:90:1.25 and 0.4);
\draw[very thick,dashed] (7,-0.4) arc(340:296.5:1.2 and 1.1);
\draw[very thick,dashed] (6.41,-1.01) arc(0:-183:0.45 and 1);
\draw[very thick,dashed] (5.52,-0.98) arc(-115:-154:1.1 and 1.07);
\draw[very thick,dashed] (5.02,-0.5) arc(-90:90:1.15 and 0.5);
\draw[very thick,dashed] (5.02,0.5) arc(150:129:1.1 and 1.2);
\draw[very thick,dashed] (5.275,0.82) arc(200:336:0.8 and 0.5);
\draw[very thick,dashed] (6.725,0.77) arc(45:21.5:1.2 and 1.1);

\draw[fill=black] (0,0.85) circle(0.04);
\node at (0,0.7) {$(x'_0,c_1)$};

\draw[fill=black] (-0.05,-1.25) circle(0.04);
\node at (-0.05,-1.4) {$(y_0,c_0)$};

\draw[fill=black] (2.25,0.85) circle(0.04);
\node at (2.25,0.7) {$(x'_1,c_2)$};

\draw[fill=black] (2.2,-1.25) circle(0.04);
\node at (2.2,-1.4) {$(y_1,c_1)$};

\draw[fill=black] (6,0.85) circle(0.04);
\node at (6,0.7) {$(x'_{\ell-1},c_{\ell})$};

\draw[fill=black] (5.95,-1.25) circle(0.04);
\node at (5.95,-1.4) {\small{$(y_{\ell-1},c_{\ell-1})$}};

\draw[fill=black] (1.7,0.1) circle(0.04);
\node at (1.7,0) {$(x_1,c_1)$};

\draw[fill=black] (3.95,0.1) circle(0.04);
\node at (3.95,0) {$(x_2,c_2)$};

\draw[fill=black] (5.45,0.1) circle(0.04);
\node at (5.55,0) {\small{$(x_{\ell-1},c_{\ell-1})$}};

\draw[fill=black] (7.7,0.1) circle(0.04);
\node at (7.7,0) {$(x_{\ell},c_{\ell})$};

\node at (0,1.5) {\Large{$S_0$}};
\node at (2.25,1.5) {\Large{$S_1$}};
\node at (6,1.5) {\Large{$S_{\ell-1}$}};

\node at (0.85,-1.7) {\Large{$\mu(S_0)$}};
\node at (3.1,-1.7) {\Large{$\mu(S_1)$}};
\node at (7.4,-1.7) {\large{$\mu(S_{\ell-1})+(x_{\ell},c_{\ell})$}};
\end{tikzpicture}
\caption{Visualization of a root cascade. The dashed lines indicate the new RIS's $\mu(S_i)$ being formed from the old RIS's $S_i$.}\label{fig_cascade}
\end{figure}
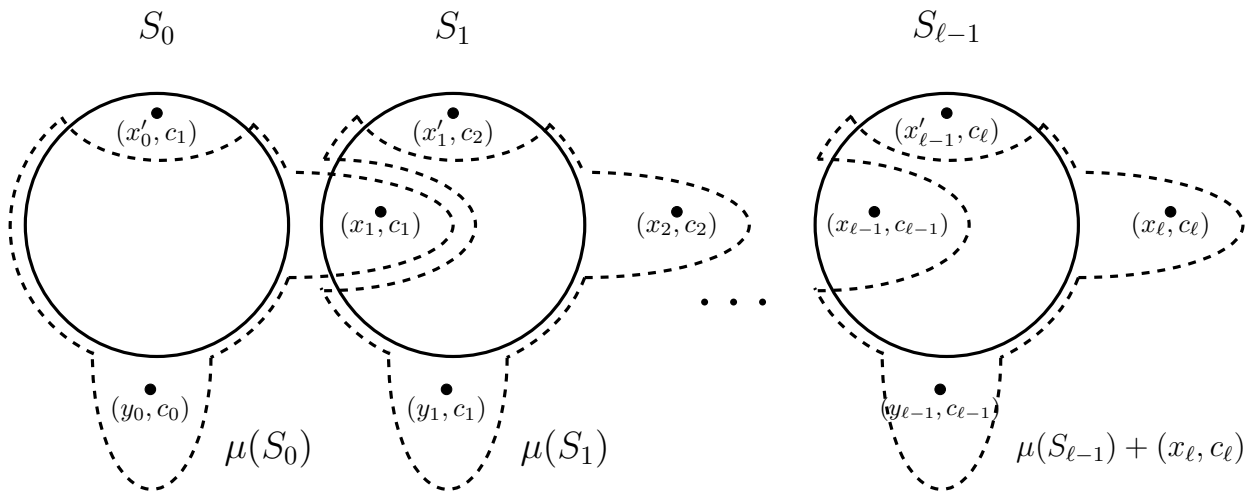

We shall adapt the notion of a \emph{cascade}, introduced in \cite{Bucic}, to that of a \emph{root cascade}.
Let $\bB$ be a base sequence of $M$ and let $\S \in \mathbb{S}(\bB)$. Let $S_0, \ldots, S_{\ell-1} \in \S$ be a sequence of distinct RIS's such that $(\S,S_0,c_0)$ is a root.  We say that $(x,c) \in U - \cup_{i=0}^{\ell-1} S_i$ is $\pmb{(\S,S_0,c_0)}${\bf-cascadable} with respect to $S_0, \ldots, S_{\ell-1} \in \S$ if there is a sequence of colours $c_0, \ldots c_{\ell-1}$ and a sequence of elements $(x_1,c_1) \in S_1, \ldots (x_{\ell-1},c_{\ell-1}) \in S_{\ell-1}$ such that: 
\begin{itemize}
\item[(C1)] We have the sequence of roots and swaps below, called a {\bf root cascade}:
\begin{equation*}
\begin{matrix}
(\S, S_0,c_0) & \xrightarrow[]{(x_1,c_1)} & (\S_1,S_1 - (x_1,c_1),c_1) & \xrightarrow[]{(x_2,c_2)} & (\S_2, S_2 - (x_2,c_2),c_2) & \xrightarrow[]{(x_3,c_3)} \cdots \\
 & & & & & \\
 & & \cdots \xrightarrow[]{(x_{\ell-1},c_{\ell-1})} & \multicolumn{2}{l}{(\S_{\ell-1}, S_{\ell-1} - (x_{\ell-1},c_{\ell-1}),c_{\ell-1})} &
\end{matrix}\label{rootseq}
\end{equation*}
and
\item[(C2)] $(x,c) \in \add(\S_{\ell -1}, S_{\ell-1} - (x_{\ell-1},c_{\ell-1}),c_{\ell-1}) .$
\end{itemize}

See Figure~\ref{fig_cascade}.  Normally, when the root $(\S,S_0,c_0)$ is implicit, we just say that $(x,c)$ is cascadable with respect to $S_0, \ldots, S_{\ell-1} \in \S$. If for some $S_\ell \in \S$, we have $(x,c) \in S_{\ell}$, then given (C2), we have $(\S_{\ell -1}, S_{\ell -1} - (x_{\ell -1}, c_{\ell -1}), c_{\ell -1}) \xrightarrow []{(x, c)} (\S_{\ell}, S_{\ell} - (x, c), c).$  We shall refer to $(\S_{\ell}, S_{\ell} - (x, c), c)$ as a {\bf root associated with} $(x, c).$  Note that this root is not necessarily unique.  Additionally, we also remark that $|\S_{\ell}| = |\S|.$

By our definition above, we note that, for $i = 1, \ldots ,\ell,$ each of the elements $(x_i,c_i)$ is cascadable with respect to $S_0, \ldots ,S_{i-1}$. 
When $\S \in \mathbb{S}_\eta(\bB)$ is an $\eta$-maximal collection and $S_0$ has size $|S_0| = i^*(\S),$ we have the following observation:

\begin{observation}\label{obs1}
Let $\S \in \mathbb{S}_\eta(\bB)$ be an $\eta$-maximal collection and assume $S_0\in \S$ has size $|S_0| = i^*(\S).$  Let $(x,c) \in U$ be $(\S,S_0,c_0)$-cascadable with respect to 
$S_0, \dots ,S_{\ell-1}$. Then:
\begin{enumerate}[label=\roman*)]
\item For $i = 1, \dots ,\ell -1,$ the set $S_i$ is an RB.
\item there exists an RB $S_{\ell} \in \S$ such that $(x,c) \in S_{\ell}$.
\end{enumerate}
\end{observation}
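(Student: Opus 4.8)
The plan is to play the root cascade off against the $\eta$-maximality of $\S$ by a sequence of signature comparisons. The guiding principle: each step of a root cascade replaces two sets of the current collection by two disjoint RIS's without changing the number of sets, so every collection arising along the way is again a member of $\mathbb{S}_\eta(\bB)$ and hence has signature $\preccurlyeq \bm{\tau}(\S)$; consequently, whenever a cascade operation would strictly raise some coordinate $\tau_k$ with no higher coordinate dropping, we reach a contradiction. First I would set up the bookkeeping. Writing $R_0 = S_0$ and $R_j = S_j - (x_j,c_j)$ for $1 \le j \le \ell-1$, the $j$-th step destroys $R_{j-1}$ and $S_j$ and creates a set $T_{j-1}$ with $|T_{j-1}| = |R_{j-1}|+1$ together with $R_j$, of size $|S_j|-1$. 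Telescoping, $\S_j = \S - \{S_0,\dots,S_j\} + \{T_0,\dots,T_{j-1},R_j\}$, where $|T_0| = i^*(\S)+1$ and $|T_i| = |S_i|$ for $1 \le i \le j-1$. (Note $i^*(\S)$ is well defined since $S_0$ is not an RB.)

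For (i) I would induct on $j$. Assume $S_1,\dots,S_{j-1}$ are RBs, so $|T_i| = |S_i| = n$ for $1 \le i \le j-1$; then in passing from $\bm{\tau}(\S)$ to $\bm{\tau}(\S_j)$ the sets $S_1,\dots,S_{j-1}$ (removed) and $T_1,\dots,T_{j-1}$ (added) cancel at coordinate $n$, and what is left is: $\tau_{i^*(\S)}$ drops by $1$, $\tau_{i^*(\S)+1}$ rises by $1$, and, were $S_j$ not an RB (hence $|S_j| \le i^*(\S)$), also $\tau_{|S_j|}$ drops and $\tau_{|S_j|-1}$ rises. Every affected coordinate is at most $i^*(\S)+1$, and the top one, $i^*(\S)+1$ — or $n$ in the boundary case $i^*(\S)=n-1$, where $T_0$ is itself an RB — strictly increases. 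This gives $\bm{\tau}(\S_j) \succ \bm{\tau}(\S)$, contradicting $\eta$-maximality; hence $S_j$ is an RB, closing the induction and proving (i).

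For (ii), by (i) (taking $\ell \ge 2$; $\ell=1$ is easier and I would dispatch it separately, with $R_0=S_0$) we have $|R_{\ell-1}| = n-1$, so from $(x,c) \in \add(\S_{\ell-1},R_{\ell-1},c_{\ell-1})$ I can complete $R_{\ell-1}$ by $(x,c)$ — directly, or indirectly through a witness $(y,c_{\ell-1})$ — to an RIS $T_{\ell-1}$ of size $n$, i.e. an RB. If $(x,c)$ were not used in $\S_{\ell-1}$, then $\S_{\ell-1} - \{R_{\ell-1}\} + \{T_{\ell-1}\}$ would be a valid member of $\mathbb{S}_\eta(\bB)$, and the same bookkeeping would make $\tau_{i^*(\S)+1}$ (or $\tau_n$) strictly larger than in $\S$ — a contradiction. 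So $(x,c)$ lies in some $W \in \S_{\ell-1}$. As $(x,c) \notin \cup_{i=0}^{\ell-1}S_i$, it cannot lie in $R_{\ell-1} \subseteq S_{\ell-1}$; the only other sets of $\S_{\ell-1}$ not belonging to $\S$ are $T_0,\dots,T_{\ell-2}$, and once these are excluded (see below) $W$ is an original set $S_\ell \in \S$ with $(x,c) \in S_\ell$. Finally, forming the root associated with $(x,c)$ — i.e. passing to $\S_\ell = \S_{\ell-1} - \{R_{\ell-1},S_\ell\} + \{T_{\ell-1}, S_\ell-(x,c)\}$ — one more signature comparison (the size-$n$ contributions of $S_1,\dots,S_{\ell-1}$ and of $T_1,\dots,T_{\ell-1}$ cancel, leaving a net rise at coordinate $i^*(\S)+1$, or $n$, unless $S_\ell$ is an RB) forces $S_\ell$ to be an RB, completing (ii).

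The step I expect to be the main obstacle is the exclusion of $W = T_i$ for some $i \le \ell-2$. Each $T_i$ is built from elements of $S_0,\dots,S_{\ell-1}$ together with the witnesses chosen along the cascade, so since $(x,c)$ avoids $\cup_{i=0}^{\ell-1}S_i$ the only way $(x,c) \in T_i$ could occur is for $(x,c)$ to coincide with the witness used at step $i+1$; ruling this out calls for careful tracking of where the cascade's witnesses reside relative to $(x,c)$ (they are unused in the corresponding intermediate collections), or for choosing $\ell$ minimal and rerouting the cascade. All the signature arithmetic above is routine and of a single flavour; it is this bookkeeping about which coloured elements the cascade actually touches that carries the weight of the argument.
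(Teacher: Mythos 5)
Your argument follows the paper's own proof quite closely: part (i) is established by the same signature comparison (take the first non-RB among $S_1,\dots,S_{\ell-1}$, observe that the corresponding $\S_i$ raises $\tau_{i^*(\S)+1}$ — or $\tau_n$ when $i^*(\S)=n-1$ — without dropping a higher coordinate), and part (ii) is again a two-step signature argument. Your bookkeeping for the coordinates is accurate, including the boundary case $i^*(\S)=n-1$ and the final check that forces $|S_\ell|>i^*(\S)$, hence $|S_\ell|=n$. Where the paper just asserts the contradiction, you spell out exactly which sets change and which coordinates move, which is fine.

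The place you flag as ``the main obstacle'' — excluding $W=T_i$ for some $i\le\ell-2$, equivalently ruling out that $(x,c)$ coincides with a witness $(y_i,c_i)$ injected during the cascade — is indeed the only genuinely delicate point, and it is worth noting that the paper's own proof does not address it either. The paper dichotomizes on $(x,c)\in U-F(\S)$ versus $(x,c)\in F(\S)$ and then manipulates $\S_{\ell-1}$, but the collection $\S_{\ell-1}-\{R_{\ell-1}\}+\{T_{\ell-1}\}$ is only a legitimate member of $\mathbb{S}_\eta(\bB)$ when $(x,c)\notin F(\S_{\ell-1})$, which is a priori a stronger condition than $(x,c)\notin F(\S)$ because $F(\S_{\ell-1})$ can gain witnesses. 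So your instinct is right that one must either (a) argue that a cascade witnessing the cascadability of $(x,c)$ can always be chosen so that $(x,c)$ is never itself a witness, or (b) observe that in every place the observation is actually invoked (notably in Lemma 2.6), the set $S_\ell\in\S$ containing $(x,c)$ is already given, so only the ``$S_\ell$ is an RB'' half of (ii) is used and the existence clause is never needed. Had you noticed route (b) you could have dispatched the gap without tracking witnesses at all; as written, you correctly identify the subtlety but leave it open. In short: same approach as the paper, correct signature arithmetic, with an honest (and legitimate) flag on a point the paper itself glosses over.
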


\begin{proof}
We may assume that (C1) and (C2) hold.  To prove the first assertion, suppose that $|S_i| < n$ for some $i \in [\ell-1]$.  If $i^*(\S) <n-1,$ then we see that $\tau_n(\S_i) = \tau_n(\S),$ but $i^*(\S_i) > i^*(\S),$ contradicting the maximality of $\S$. On the other hand, if $i^*(\S) = n-1,$ then $\tau_n(\S_i) = \tau_n(\S) +1,$ contradicting the maximality of $\S.$  Thus $|S_i| =n$ for all $i \in [\ell-1].$
To prove the second assertion,  suppose first that $(x, c) \in U - F(\S).$  Then by (C2),
$S_{\ell-1}'  = S_{\ell -1} - (x_\ell',c) + (y,c_{\ell -1}) + (x, c)$ is an RIS for some $(y,c_{\ell-1}) \in U - F(\S)$ and $(x_{\ell}', c) \in S_{\ell -1}.$ Note that $(x,c)$ cannot be directly $(\S_{\ell -1}, S_{\ell-1} - (x_{\ell-1},c_{\ell-1}),c_{\ell-1})$-addable, since $S_{\ell-1}$ is an RB.
Moreover, $\S_{\ell -1}' = \S_{\ell -1} - (S_{\ell -1}- (x_{\ell -1}, c_{\ell -1})) + S_{\ell -1}'$ is a collection for which $\bm{\tau}(\S) \prec \bm{\tau}(\S_{\ell -1}') ,$ contradicting the maximality of $\S.$  
Thus $(x, c)$ belongs to some set in $S_{\ell} \in \S - \{ S_0, \dots ,S_{\ell -1} \}$ and, again we must have $|S_{\ell}| > |S_0|;$  Otherwise, $\bm{\tau}(\S) \prec \bm{\tau}(\S_{\ell -1}').$   Consequently,  $S_{\ell}$ must also be an RB.
\end{proof}   

 We denote the set of all $(\S,S_0,c_0)$-cascadable elements with respect to $S_0, \dots, S_{\ell-1}$ by\\ $\casc_{\S,c_0}(S_0, \dots, S_{\ell-1})$.  In most cases, when the root $(\S,S_0,c_0)$ is implicit, we shall just write $\casc(S_0, \dots, S_{\ell-1})$.
 
 When (C1) holds, we note that there is a {\bf natural bijection} $\mu: \S \rightarrow \S_{\ell-1}$, defined so that
\begin{itemize}
\item $\mu(S_0)$ is the set obtained from $S_0$ by $(\S,S_0,c_0)$-addition of $(x_1,c_1)$,
\item for $1 < j < \ell-1$, the set $\mu(S_j)$ is obtained from $S_j - (x_j,c_j)$ by $(\S_j, S_j - (x_j,c_j), c_j)$-addition of $(x_{j+1},c_{j+1})$,
\item $\mu(S_{\ell-1}) = S_{\ell-1}-(x_{\ell-1},c_{\ell-1})$,
\item $\mu(S) = S$ for all $S \in \S-\{S_0, \ldots, S_{\ell-1}\}$.
\end{itemize}
The set $\mu(S)$ should be thought of as the set ``corresponding'' to $S$ following the root cascade.
 
 
\subsection{Submaximal collections}

Fix a positive integer $\eta \in [n]$. Observe that any two $\eta$-maximal collections have the same signature, which we will denote by $\bm{\tau}_\eta = (t_1, t_2, \dots ,t_n)$.  
Clearly $t_n \ne 0$. Assuming that $t_n < \eta$, we let $i^* = \max \{i\le n-1 \ \big| \ t_i \ne 0 \}$. That is, $i^* = i^*(\S)$, where $\S \in \mathbb{S}(\bB)$ is any $\eta$-maximal collection.

Suppose $t_{n-1} = 0.$  In this case, a collection $\S' \in \mathbb{S}_{\eta}(\bB)$ is said to be $\bm{\eta}${\bf-submaximal} if its signature $\bm{\tau}(\S')= (t_1',t_2', \dots ,t_n')$ can be obtained in the following way:  Let $\S$ be an $\eta$-maximal collection. Suppose one deletes an element from an RB in $\S$ and adds an element to another set in $\S$ of size $i^*(\S)$. Then $(t_1', \dots, t_n')$ is the signature of the resulting collection.
There are three possibilities for $(t_1',t_2', \dots ,t_n')$, depending on whether $i^* = n-2,\ i^* = n-3,$ or $i^* \le n-4$, as indicated below:

$$(t_1', \dots ,t_n') = \left\{ \begin{array}{lr}
(t_1, \dots ,t_{n-3}, t_{n-2}-1, 2, t_n-1) & \mathrm{if}\  i^* = n-2\\
(t_1, \dots ,t_{n-4}, t_{n-3}-1, 1,1,t_{n-1})    & \mathrm{if}\  i^* = n-3\\
(t_1,\dots ,t_{i^*-1}, t_{i^*}-1,1,0, \dots ,0,1,t_n-1) & \mathrm{if}\  i^* \le n-4
\end{array}\right.$$

It should be remarked that from the above, we always have that either $t_{n-1}' =1$ or $t_{n-1}' =2.$  The next lemma illustrates that if one starts with a root $(\S,S_0,c_0)$ where $|S_0| = i^*(\S)$ and $\S$ is $\eta$-maximal or $\eta$-submaximal, then all the roots in a root cascade (\ref{rootseq})
are either $\eta$-maximal or $\eta$-submaximal.  We leave the proof to the reader.

\begin{lemma}
Let $\S \in \mathbb{S}_{\eta}(\bB)$, where $\tau_n(\S) < |\S|,$ and let $(\S,S_0,c_0)$ be a root where $|S_0| = i^*(\S).$  Suppose $(\S,S_0,c_0) \xrightarrow[]{(x,c)} (\S', S_0',c)$.
Then we have the following:
\begin{itemize}
\item[i)]  If $\S$ is $\eta$-maximal and $t_{n-1} >0,$ then $\S'$ is $\eta$-maximal and $|S_0'| = n-1.$
\item[ii)] If $\S$ is $\eta$-maximal  and $t_{n-1}=0,$ then $\S'$ is $\eta$-submaximal and $|S_0'| = i^*(\S') = n-1.$
\item[iii)] If $\S$ is $\eta$-submaximal, then $|S_0'| =i^*(\S')$ and either $\S'$ is $\eta$-maximal and  or $\S'$ is $\eta$-submaximal.
\end{itemize}\label{lem-maxsubmax}
\end{lemma}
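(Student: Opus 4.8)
The statement is purely a bookkeeping claim about how the signature changes under a single root transition, so the proof is a case analysis driven by the explicit formulas already displayed for $\eta$-maximal and $\eta$-submaximal signatures. The key structural facts I would invoke are: (a) a transition $(\S,S_0,c_0)\xrightarrow{(x,c)}(\S',S_0',c)$ deletes one element from some RIS $S_1\in\S-S_0$ (namely $S_1'=S_1-(x,c)$) and replaces $S_0$ by a set $T$ with $|T|=|S_0|+1$ (direct addition) or $|T|=|S_0|+1$ again after an indirect swap — in all cases $|T|=|S_0|+1$; and (b) $\S$ and $\S'$ have the same cardinality, so both lie in $\mathbb{S}_\eta(\bB)$. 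Thus the effect on $\bm\tau$ is: decrement $\tau_{i^*}$, increment $\tau_{i^*+1}$, decrement $\tau_{|S_1|}$, increment $\tau_{|S_1|-1}$, where $i^*=i^*(\S)=|S_0|$. The only subtlety is which set $S_1$ gets shrunk, i.e. what $|S_1|$ is.

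For part (i): if $\S$ is $\eta$-maximal with $t_{n-1}>0$, then $i^*(\S)=n-1$, so $T$ becomes an RB. If the shrunk set $S_1$ had size $<n$, we would strictly increase $\tau_n$ (since $i^*=n-1$ means $|S_1|\le n$ and the only way to not increase $\tau_n$ is $|S_1|=n$, which just moves an RB to size $n-1$), contradicting $\eta$-maximality unless $|S_1|=n$; and if $|S_1|=n$ the net effect is $\tau_{n-1}\mapsto \tau_{n-1}-1+1=\tau_{n-1}$ wait — here I must be careful: $\tau_{n-1}$ increases by $1$ (from $T$) and decreases by $1$ (from shrinking $S_1$ to size $n-1$), and $\tau_n$ is unchanged, so $\bm\tau(\S')=\bm\tau(\S)=\bm\tau_\eta$, hence $\S'$ is $\eta$-maximal; moreover $|S_0'|=|S_1|-1=n-1$. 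One must also rule out $|S_1|<n$: since $\S$ is $\eta$-maximal and $i^*=n-1$, the identical argument as in Observation~\ref{obs1} forces the newly added element to land in an RB, so $|S_1|=n$. For part (ii): $\S$ is $\eta$-maximal, $t_{n-1}=0$, so $i^*\le n-2$, $T$ has size $i^*+1\le n-1<n$, and again the added element must land in an RB (Observation~\ref{obs1} reasoning), so $|S_1|=n$; then $\bm\tau(\S')$ is obtained from $\bm\tau_\eta$ by $t_{i^*}\mapsto t_{i^*}-1$, $t_{i^*+1}\mapsto t_{i^*+1}+1=1$, $t_n\mapsto t_n-1$, $t_{n-1}\mapsto t_{n-1}+1=1$ — this is exactly one of the three displayed submaximal signatures (according to whether $i^*=n-2$, $n-3$, or $\le n-4$), so $\S'$ is $\eta$-submaximal, and $i^*(\S')=n-1$ since $t_{n-1}'\ge1$, with $|S_0'|=n-1$.

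For part (iii): here $\S$ is $\eta$-submaximal, so $\bm\tau(\S)$ is one of the three displayed vectors, and $i^*(\S)$ is the position of the "lower bump" (value $1$ or, in the $i^*=n-2$ case, the value $2$ at position $n-1$... actually no — in the $\eta$-submaximal case $t_{n-1}'\in\{1,2\}$ is always nonzero, so $i^*(\S')$-type quantities need care). I would argue: since $t_{n-1}(\S)\ge1$, an $\eta$-submaximal $\S$ has $i^*(\S)=n-1$ whenever $t_{n-1}(\S)\ge1$ and there is any non-RB... wait, $t_{n-1}\ge1$ already gives a non-RB of size $n-1$, so $i^*(\S)=n-1=|S_0|$, hence $T$ is an RB and $\tau_{n-1}$ gains $1$, $\tau_n$ loses nothing extra. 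The shrunk set $S_1$: if $|S_1|=n$, then $\tau_n\mapsto\tau_n-1$, $\tau_{n-1}\mapsto\tau_{n-1}+1-1$... I then compare the resulting vector against the $\eta$-maximal vector $\bm\tau_\eta$ and the submaximal list: depending on whether $t_{n-1}(\S)=1$ or $2$ and on $i^*$ relative to $n$, $\bm\tau(\S')$ will either equal $\bm\tau_\eta$ (when the bump gets reabsorbed) or be another submaximal vector. In every subcase one checks $|S_0'|=i^*(\S')$ holds because the unique non-RB of smaller size after the transition sits at the position dictated by the formulas. The one case needing an extra argument is when $|S_1|<n$ is forced to be impossible or handled: but for submaximal $\S$, $\mathbb{S}_\eta$-maximality of the original $\eta$-maximal collection again constrains where added elements go, analogous to Observation~\ref{obs1}, so the added element lands in an RB and $|S_1|=n$.

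**Main obstacle.** The real work — and the reason the authors "leave the proof to the reader" — is that part (iii) fans out into several subcases ($i^*=n-2$ vs $n-3$ vs $\le n-4$, crossed with $t_{n-1}=1$ vs $2$, and within the $i^*=n-2$ submaximal shape whether the transition removes the element from the size-$(n-1)$ set counted by the "$2$" or from an RB), each requiring one to write down the perturbed vector and match it against the master list; there is no single clean argument, just disciplined enumeration. The cleanest packaging is probably a lemma-internal sub-observation, "in any root transition from an $\eta$-maximal or $\eta$-submaximal $\S$ with $|S_0|=i^*(\S)$, the element added by $(\S,S_0,c_0)$-addition lands in an RB," proved once by the Observation~\ref{obs1} argument, after which each part reduces to arithmetic on the six explicit vectors.
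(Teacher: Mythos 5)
Your treatment of parts (i) and (ii) is sound: when $\S$ is $\eta$-maximal and $|S_0|=i^*(\S)$, the element $(x,c)$ must indeed lie in an RB $S_1$, since otherwise the resulting collection $\S'\in\mathbb{S}_\eta(\bB)$ would have $\bm\tau(\S')\succ\bm\tau(\S)$ (either $\tau_n$ strictly increases when $i^*=n-1$, or $\tau_{i^*+1}$ increases with $\tau_n,\dots,\tau_{i^*+2}$ fixed when $i^*\le n-2$), and the signature arithmetic then gives exactly the stated conclusions.

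Part (iii), however, has a genuine gap. Your proposed master sub-observation --- ``in any root transition from an $\eta$-maximal or $\eta$-submaximal $\S$ with $|S_0|=i^*(\S)$, the added element lands in an RB'' --- is false in the submaximal case, and you cannot prove it ``by the Observation~\ref{obs1} argument,'' because Observation~\ref{obs1} is about $\eta$-maximal collections. The relevant statement is Observation~\ref{obs2}, which explicitly permits $|S_1|=n-1$ (when $\tau_{n-1}(\S)=2$) or $|S_1|=i^{**}(\S)$ (when $\tau_{n-1}(\S)=1$). And these cases do not lead to a contradiction: for instance, if $\tau_{n-1}(\S)=2$ with $\bm\tau(\S)=(t_1,\dots,t_{n-3},t_{n-2}-1,2,t_n-1)$ and $(x,c)$ lies in the other set $W$ of size $n-1$, then the transition sends $S_0\mapsto T$ of size $n$ and $W\mapsto W-(x,c)$ of size $n-2$, producing $\bm\tau(\S')=(t_1,\dots,t_{n-2},0,t_n)=\bm\tau_\eta$: exactly the ``bump gets reabsorbed'' outcome that you mentioned, then dismissed. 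Here $\S'$ is $\eta$-maximal (not submaximal), with $|S_0'|=n-2=i^*(\S')$, so the lemma's conclusion holds --- but your argument does not reach it, because it assumes away the case. Symmetrically, when $\tau_{n-1}(\S)=1$ and $|S_1|=i^{**}(\S)$, the signature again collapses to $\bm\tau_\eta$. To repair the proof, replace the appeal to ``an Observation~\ref{obs1}-style argument'' in part (iii) with Observation~\ref{obs2}, which constrains $|S_1|$ to $\{n-1,n\}$ or $\{i^{**}(\S),n\}$ according to $\tau_{n-1}(\S)$, and then carry out the signature arithmetic in each of those admissible sizes of $S_1$: the size-$n$ subcase leaves $\bm\tau$ fixed (so $\S'$ is $\eta$-submaximal with $|S_0'|=n-1=i^*(\S')$), while the smaller-size subcase restores $\bm\tau_\eta$ (so $\S'$ is $\eta$-maximal with $|S_0'|=i^*(\S')$).
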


Corresponding to Observation \ref{obs1}, we have the following observation for $\eta$-submaximal collections:

\begin{observation}\label{obs2}
Let $\S \in \mathbb{S}_{\eta}(\bB)$ be a $\eta$-submaximal collection and let $S_0 \in \S$ where $|S_0| = n-1.$ Let $(x,c) \in U$ be $(\S,S_0,c_0)$-cascadable with respect to $S_0, \dots ,S_{\ell-1}.$  Then
$(x,c) \in S_\ell$, for some set $S_\ell \in \S - \{ S_0, \dots ,S_{\ell-1} \}.$ Moreover, for $i = 1, \dots ,\ell,$ 
$|S_i| \ge n-1,$ if $\tau_{n-1}(\S) =2$, and $|S_i| = i^{**}(\S)$ or $|S_i| =n$, if $\tau_{n-1}(\S) =1.$
\end{observation}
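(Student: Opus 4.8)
The plan is to mimic the proof of Observation~\ref{obs1}, tracking how the signature evolves along the root cascade and invoking Lemma~\ref{lem-maxsubmax} to control the sizes of the sets $S_i$ at each step. First I would use the fact that $\S$ is $\eta$-submaximal with $|S_0| = n-1 = i^*(\S)$ (as guaranteed by the constructions in the definition of $\eta$-submaximal and by Lemma~\ref{lem-maxsubmax}(ii)--(iii)), so that $(\S,S_0,c_0)$ is a root with $S_0$ of the extremal size. I would then apply Lemma~\ref{lem-maxsubmax}(iii) repeatedly along the cascade: each transition $(\S_{j-1}, S_{j-1}-(x_{j-1},c_{j-1}), c_{j-1}) \xrightarrow{(x_j,c_j)} (\S_j, S_j - (x_j,c_j), c_j)$ takes an $\eta$-submaximal collection to an $\eta$-maximal or $\eta$-submaximal one, with the ``working set'' always of size $i^*$ of the current collection; this forces $|S_j|$ to be either $n$ (if $S_j$ came from a full RB whose deleted element left a size-$(n-1)$ hole that then absorbed $(x_{j+1}, c_{j+1})$) or the next-largest available size, which one reads off from the explicit signature formulas.

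The substantive case analysis is on $\tau_{n-1}(\S)$. When $\tau_{n-1}(\S) = 2$: the signature of an $\eta$-submaximal collection has $t'_{n-1} = 2$ only in the $i^* = n-2$ case, i.e. $\bm{\tau}(\S) = (t_1, \dots, t_{n-3}, t_{n-2}-1, 2, t_n - 1)$. So every set in $\S$ other than the full RBs has size $n-1$ or $n-2$; the set $S_0$ has size $n-1$, and I would argue that each $S_i$ appearing in the cascade must have size $\ge n-1$ — if some $S_i$ had size $n-2$, then performing the cascade up to that point and the subsequent addition would create a collection with a strictly larger signature (more size-$(n-1)$ or size-$n$ sets), contradicting that all collections along the cascade are $\eta$-maximal/submaximal with the fixed signature profile, exactly paralleling the argument in Observation~\ref{obs1}. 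When $\tau_{n-1}(\S) = 1$: here $i^* = n-3$ gives $t'_{n-1} = 1$ with a lone size-$(n-2)$ set, or $i^* \le n-4$ gives $t'_{n-1} = 1$ with $i^{**}(\S) = i^*$; in either subcase the ``second-largest'' size below $n$ is exactly $i^{**}(\S)$ (note $n-2 = i^{**}$ when $i^* = n-3$ since the unique size-$(n-2)$ set is the only non-RB apart from the size-$(n-1)$ one), and so each $S_i$ is either the size-$i^{**}(\S)$ set or a full RB of size $n$ — any other size would again violate maximality via the natural-bijection argument. Finally, for the conclusion that $(x,c) \in S_\ell$ for some $S_\ell \in \S - \{S_0, \dots, S_{\ell-1}\}$: by (C2), $(x,c)$ is addable at the end of the cascade; if $(x,c) \notin F(\S)$ then, exactly as in Observation~\ref{obs1}, we could enlarge a set to get a strictly larger signature, a contradiction, so $(x,c)$ lies in some RIS of $\S$, and it cannot be one of $S_0, \dots, S_{\ell-1}$ since $(x,c) \in U - \cup_{i=0}^{\ell-1} S_i$ by the definition of cascadable.

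The main obstacle I anticipate is pinning down precisely which sizes are forbidden for the intermediate $S_i$ in each subcase — one must check that the signature obtained by truncating the cascade at step $i$ and then (if $S_i$ had the ``wrong'' size) performing one more addition is lexicographically larger than $\bm{\tau}_\eta$-derived signatures. This requires carefully distinguishing, in the $\tau_{n-1}(\S) = 1$ case, the sub-subcase $i^* = n-3$ (where $i^{**} = n-2$) from $i^* \le n-4$ (where $i^{**}$ can be small), and confirming that in both the statement ``$|S_i| = i^{**}(\S)$ or $|S_i| = n$'' is the sharp conclusion; the bookkeeping is routine but must be done consistently with the recursive lexicographic order $\preccurlyeq$. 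Since the paper says the analogous Lemma~\ref{lem-maxsubmax} is left to the reader, I would likewise present this as a short argument reducing to Observation~\ref{obs1}'s template plus the explicit signature formulas, and omit the most mechanical verifications.
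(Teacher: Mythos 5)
Your overall strategy matches the paper's: track how the signature evolves along the cascade, use the $\eta$-maximal/$\eta$-submaximal invariant from Lemma~\ref{lem-maxsubmax}, split on $\tau_{n-1}(\S)\in\{1,2\}$, and derive a $\preccurlyeq$-contradiction to rule out small $|S_i|$ and to show $(x,c)$ cannot be unused. The paper carries this out by directly recomputing $\bm{\tau}(\S_i)$ after each step rather than citing Lemma~\ref{lem-maxsubmax}(iii), but that is only a packaging difference, not a different route.

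One concrete slip worth fixing: in the sub-subcase $i^*\le n-4$, you assert $i^{**}(\S)=i^*$. Looking at the $\eta$-submaximal signature $(t_1,\dots ,t_{i^*-1}, t_{i^*}-1,1,0, \dots ,0,1,t_n-1)$, the entry at position $i^*+1$ is $1$, so $i^{**}(\S)=i^*+1$, not $i^*$ (and this remains true even if $t_{i^*}-1=0$). This is the value that actually shows up when you compare $\bm{\tau}(\S_i)$ with $\bm{\tau}_\eta$: if $|S_i|<i^{**}(\S)=i^*+1$, then $\tau_n(\S_i)=t_n$, $\tau_j(\S_i)=t_j=0$ for $i^*+1<j\le n-1$, and $\tau_{i^*+1}(\S_i)=1>0=t_{i^*+1}$, giving the desired contradiction. (Note also that the paper's own sentence ``$t_{n-2}=t_{n-1}=0$ and $t_{n-3}>0$, \ldots $i^{**}(\S)=n-2$'' implicitly specializes to $i^*=n-3$; the same comparison at index $i^*+1$ handles $i^*\le n-4$ uniformly.) A second, harmless imprecision: your remark that when $\tau_{n-1}(\S)=2$ ``every set in $\S$ other than the full RBs has size $n-1$ or $n-2$'' is not forced by the signature (smaller sets may exist); your actual contradiction argument does not rely on it, so it is merely a stray claim rather than a gap.
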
 

\begin{proof}
We may assume that (C1) and (C2) hold.  We note that $t_{n-1} =0$ since $\S$ is $\eta$-submaximal.  Let $\bm{\tau}(\S) = (t_1', \dots ,t_n').$  Suppose $t_{n-1}' =2.$  If $|S_i| < n-1,$ for some $i \in [\ell-1],$ then we see that $\tau_n(\S_i) = t_n$ and $\tau_{n-1}(\S_i) = t_{n-1}' -1 =1 > t_{n-1} =0.$
Thus $\bm{\tau}_\eta \prec \bm{\tau}(\S_i),$ a contradiction. Suppose $t_{n-1}' =1.$  Then $t_{n-2} = t_{n-1} =0$ and $t_{n-3} > 0.$  We need only show that $|S_i| \ge i^{**}(\S).$  Suppose to the contrary that $|S_i| < i^{**}(\S)=n-2.$  Then we see that $\tau_n(\S_i) = t_n$ and $i^{**}(\S_i)= i^{**}(\S) =n-2.$  Thus have
$\bm{\tau}_\eta \prec \bm{\tau}(\S_i),$ a contradiction.

Suppose $(x,c) \in U - F(\S).$  By (C2), either $S_{\ell-1}' = S_{\ell -1} - (x_{\ell -1}, c_{\ell -1}) + (x,c)$ is an RIS, or for some 
$(y,c_{\ell-1}) \in U - F_{c_{\ell-1}}(\S_{\ell-1})$ and $(x',c) \in S_{\ell-1}$, the set $S_{\ell-1}' =  S_{\ell -1} - (x_{\ell -1}, c_{\ell -1}) - (x',c) + (y,c_{\ell-1}) + (x,c)$ is an RIS.  
In either case, let $\S_{\ell-1}'$ be the collection obtained from $\S_{\ell-1}$ by deleting the set $S_{\ell -1} - (x_{\ell -1}, c_{\ell -1})$ and adding $S_{\ell-1}'.$ Then 
$\tau_n(\S_{\ell-1}') = t_n.$  Furthermore, we see that if $t_{n-2}'=2,$ then $i^*(\S_{\ell-1}') = i^*(\S) = n-1.$  On the other hand, if $t_{n-1}' =1,$ then $i^*(\S_{\ell-1}') = i^{**}(\S) >i^*.$
Thus we see that
$\bm{\tau}_\eta \prec \bm{\tau}(\S_{\ell-1}'),$ a contradiction.  It follows that, for some set $S_\ell \in \S - \{ S_0, \dots ,S_{\ell -1} \},$ $(x,c) \in S_\ell.$
One can now use the previous arguments to show that $|S_{\ell}| \ge n-1,$ if $t_{n-1}' =2,$ and $|S_{\ell}| \ge i^{**}(\S),$ if $t_{n-1}' = 1.$ 
\end{proof}

\subsection{Finding a concentration of addable elements in a cascade}

Our primary goal in this section is to show that, under certain conditions, one can find a root $(\S, S_0,c_0)$ and sets $S_0, \dots ,S_{\ell -1}$ so that for a certain positive integer $k,$ there are at least $k$ cascadable elements with respect to $S_0, \dots ,S_{\ell-1}$ which belong to some set $S_{\ell} \in \S.$  While this is also done in \cite{Bucic}, our approach here is much simpler.
Let $\S \in \mathbb{S}(\bB)$ where $\tau_n(\S) < |\S|.$  Given a root $(\S,S_0,b)$, where $|S_0| = i^*(\S),$ we let $r(\S,S_0,b) = \max_{S' \in \S - S_0} |\add(\S,S_0,b) \cap S'|.$  Furthermore, we let $r(\S) = \max_{(\S,S_0,b)}r(\S,S_0,b)$, where the maximum is taken over all roots $(\S,S_0,b)$ such that
$|S_0| = i^*(\S).$

%

The following key lemma, which applies to case where $\bB$ is disjoint, can also be adapted to the case where the bases of $\bB$ are $\kappa$-overlapping.  We shall follow a similar strategy to the one used in \cite{Bucic}.  Our aim is to show that if $\S \in \mathbb{S}_\eta(\bB)$, where $\eta = n-\alpha$, 
then in the case where $t(\bB) < n-\alpha$, one can find sets $S_0, S_1, \dots ,S_\ell$ in $\S$  such that a large number of elements which are cascadable with respect to $S_0, \dots, S_{\ell-1}$ belong to the set $S_\ell.$ 
  
\begin{lemma}\label{lem-klemma}
Suppose $\bB$ is a disjoint base sequence.  Let $k$ and $\alpha$ be positive integers such that $\alpha \geq k(k+1)+\beta$ and $n \ge \alpha + 3.$  Let $\eta = n -\alpha$.  If $t(\bB) < \eta -1,$ then there is a collection $\S \in \mathbb{S}_\eta(\bB)$, which is $\eta$-maximal or $\eta$-submaximal, for which there exists a root $(\S,S_0,b)$ and distinct sets $S_0,S_1, \dots ,S_{\ell} \in \S$, $\ell \le k$, such that $|S_\ell \cap \casc_{\S,b}(S_0, \dots ,S_{\ell -1})| \ge k$.
\end{lemma}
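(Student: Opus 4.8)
The strategy is to take an $\eta$-maximal collection $\S$ and, repeatedly, to either find the desired concentration of cascadable elements in a single set, or else derive a contradiction with maximality. The key quantity is $r(\S)$: if $r(\S) \geq k$ then we are done immediately with $\ell = 1$, since there is a root $(\S, S_0, b)$ with $|S_0| = i^*(\S)$ and a set $S_1 \in \S - S_0$ with $|\mathrm{add}(\S, S_0, b) \cap S_1| \geq k$, and every element of $\mathrm{add}(\S, S_0, b)$ lying in a set of $\S$ is cascadable with respect to the single set $S_0$. So assume $r(\S) < k$ for every $\eta$-maximal (and $\eta$-submaximal) collection under consideration; the task is to build the chain $S_0, \dots, S_\ell$ one step at a time, using the girth hypothesis to force many addable elements at each stage.

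\textbf{The inductive construction.} Start with a root $(\S, S_0, c_0)$ where $|S_0| = i^*(\S)$; such a root exists because $t(\bB) < \eta - 1$ forces $\tau_n(\S) < |\S|$ (indeed $\tau_n$ of a maximal collection equals $t(\bB)$, which is less than $\eta$). By Lemma~\ref{lem-maxsubmax}, after one cascade step we land in an $\eta$-maximal or $\eta$-submaximal collection with the active set having size $n-1$, and this persists along the whole cascade. The engine is the girth bound: for a root $(\S_i, S_i', c_i)$ with $|S_i'| = n-1$, since $g \geq n - \beta + 1$, the set $\underline{S_i'}$ together with most of $B_{c_i}$ stays independent — concretely, $B_{c_i} \setminus \mathrm{cl}_M(\underline{S_i'})$ has size at least $\beta$ is false; rather $|B_{c_i} \cap \mathrm{cl}_M(\underline{S_i'})|$ is small, so there are many elements $x \in B_{c_i}$ with $\underline{S_i'} + x$ independent. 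Combining this with Lemma~\ref{lem_swappable} and Lemma~\ref{lem_injection}, one shows that the number of $(\S_i, S_i', c_i)$-addable elements is at least $n - (\text{something linear in }\beta)$. Since $\bB$ is disjoint, these addable elements live in distinct bases (distinct sets of $\S$), so they are spread across at least $n - O(\beta)$ of the sets of $\S$. If $|\S| = \eta = n - \alpha$ is large and the addable set is large, then by averaging (pigeonhole), either many addable elements concentrate in one set — giving the $\ell$ we want — or they are spread thinly, one per set, across $\geq k$ new sets $S_1, S_2, \dots$, and we may pick any one, say the next $S_{i+1}$, continue the cascade, and repeat. Each step consumes at most one ``used up'' possibility; after at most $k$ steps, a counting argument forces the concentration, because we cannot keep finding $\geq k$ fresh sets each carrying exactly one addable element while $r(\S) < k$ without eventually overflowing the budget $\alpha \geq k(k+1) + \beta$.

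\textbf{The counting.} The constant $\alpha \geq k(k+1) + \beta$ is exactly what makes the pigeonhole close: roughly, at step $i$ there are $\geq n - O(\beta) - (i-1)$ addable elements (after removing the $\leq i-1$ sets already in the chain and a $\beta$-sized defect from the closure), distributed over the $\leq \eta - i = n - \alpha - i$ remaining sets; if none has $\geq k$ of them, the total is at most $(k-1)(n - \alpha - i) + (\text{contribution of the chain sets})$, and comparing this to the lower bound $n - O(\beta)$ on the addable count yields $\alpha + i \lesssim \beta + (k-1)^{-1}\cdot(\dots)$, which fails once $\alpha \geq k(k+1)+\beta$ and $i \leq k$. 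So the concentration must occur at some step $\ell \leq k$, producing $S_\ell \in \S$ with $|S_\ell \cap \casc_{\S, b}(S_0, \dots, S_{\ell-1})| \geq k$. Throughout one must check that cascadability is preserved: an element addable at $(\S_i, S_i', c_i)$ and lying in a set of $\S$ is, by definition (C1)–(C2), cascadable with respect to $S_0, \dots, S_i$, and Observation~\ref{obs1} or~\ref{obs2} guarantees it lands in an actual set of $\S$ (an RB, in the maximal case).

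\textbf{Main obstacle.} The delicate point is bookkeeping the interaction between the two cases (maximal vs.\ submaximal) and ensuring the addable-set lower bound $n - O(\beta)$ genuinely holds at \emph{every} step with the \emph{same} linear-in-$\beta$ defect — the submaximal collections have a slightly different signature structure (a ``$2$'' or a ``$1$'' in coordinate $n-1$, possibly a gap down to $i^{**}$), and one must verify via Lemma~\ref{lem-maxsubmax} and Observation~\ref{obs2} that the cascade never escapes this controlled family and that the count of addable elements is not degraded. Making the averaging uniform across these cases, and correctly accounting for the $\leq \ell \leq k$ chain sets (which may or may not contribute addable elements and may be RBs or size-$(n-1)$ sets) so that the budget $k(k+1) + \beta$ is tight, is where the real care is needed; the girth estimate and the disjointness (ensuring ``distinct bases'' $=$ ``distinct sets'') are comparatively routine once set up.
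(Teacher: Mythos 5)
You have the right high-level ingredients --- girth forcing many swappable hence addable elements, an extremal choice over $r(\S)$, and a pigeonhole to force concentration --- but two steps are genuinely off. First, your girth estimate is incorrect: you argue that girth makes $|B_{c_i}\cap \cl_M(\underline{S_i'})|$ small so that many $x\in B_{c_i}$ keep $\underline{S_i'}+x$ independent, but girth controls circuit sizes, not the size of a rank-$(n-1)$ flat. What the paper actually does at the root $(\S_{\ell i}, S_\ell-(x_i,c_i),c_i)$, with $S_\ell$ an RB: any unused $(y,c_i)$ cannot be directly addable (else the signature of $\S$ would improve), so $\underline{S}_\ell-x_i+y$ contains a circuit, which by girth has size $\ge n-\beta+1$ and hence meets $\underline{S}_\ell-x_i$ in $\ge n-\beta$ elements; these give $\ge n-\beta$ swappable elements, and Lemmas~\ref{lem_swappable} and~\ref{lem_injection} convert them into $\ge n-\beta$ addable elements $(\phi_c(x_i),c)$.

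Second, your greedy ``extend one step, repeat'' cascade does not close the count. A single batch of $n-\beta$ addable elements spread over the $n-\alpha-\ell-1$ outside sets with $<k$ per set accounts for $<k(n-\alpha-\ell-1)$, which comfortably dominates $n-\beta$: no overflow, no contradiction. The paper's argument is extremal, not iterative: it fixes $\S$ with $r(\S)$ maximum among $\eta$-maximal collections, then fixes a chain $S_0,\dots,S_\ell$ maximizing $k'=|S_\ell\cap\casc_{\S,b}(S_0,\dots,S_{\ell-1})|$ subject to $\ell\le k'$. The crucial step you are missing is that \emph{each} of the $k'$ cascadable elements $(x_i,c_i)\in S_\ell$ spawns its own root $\S_{\ell i}$ with a set $R_i$ of $\ge n-\beta$ addable elements, and the $R_i$ are pairwise disjoint because the $\phi_c$ are injections. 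This produces a pool $Q'$ of $\ge k'(n-\beta)$ elements; by maximality of $k'$, the $n-\alpha-\ell-1$ sets outside the chain each absorb at most $k'$ of $Q'$, so at least $k'(\alpha-\beta+\ell+1)\ge k'(\alpha-\beta)\ge \ell k'(k'+1)$ land in the $\ell k'$ sets $\mu_i(S_j)$ (here $\alpha\ge k(k+1)+\beta\ge \ell(k'+1)+\beta$), and averaging gives some $\mu_i(S_j)$ carrying $\ge k'+1$, i.e.\ $r(\S_{\ell i})>k'\ge r_{\max}$. That contradicts the choice of $\S$ unless $\S_{\ell i}$ is $\eta$-submaximal, which forces $t_{n-1}=0$ and triggers a second, parallel round for submaximal collections closing with the $\tau_{n-1}(\S')\in\{1,2\}$ case check you rightly flag as the delicate part. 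The multiplication by $k'$ --- $k'$ disjoint pools rather than one --- is what makes $\alpha\ge k(k+1)+\beta$ bite; a single per-step pool cannot overflow that budget.
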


\begin{proof}
Since $t(\bB) < \eta -1,$ all collections in $\mathbb{S}_\eta(\bB)$ have at least two sets which are not RB's.  Among all $\eta$-maximal collections, let $\S$ be an $\eta$-maximal collection for which $r(\S)$ is maximum.  Let $\bm{\tau}(\S) = (t_1, \dots ,t_n)$.  Let $r_{\max} = r(\S)$ and let $(\S,S_0,b)$ be a root where $|S_0| = i^*(\S)$ and $r(\S,S_0,b) =r_{\max}.$   Clearly $r_{\max} \ge 1.$  Given that if $r_{\max} \ge k,$ the assertion is true, we may assume that $r_{\max} <k.$ 

By Lemma \ref{lem_injection}, for all $S \in \S \in \mathbb{S}(\bB)$, and for any colour $c$, we may assume there exists an injection $\phi_c: \underline{S} \rightarrow B_c$ such that for all $x \in \underline{S}$, the set $\underline{S}-x + \phi_c(x)$ is independent.
Let $S_0, S_1, \ldots, S_{\ell} \in \S$ be a sequence of distinct RIS's, with $\ell \le k'$, such that 

\begin{itemize}
\item[i)] $|S_\ell \cap \casc_{\S,b}(S_0, \dots ,S_{\ell -1})| = k',$ and 
\item[ii)] subject to i), $k'$ is largest possible.  
\end{itemize}

Clearly $r_{\max} \le k'.$
If $k' \ge k,$ then the assertion holds.  Thus we may assume that $k' < k.$ 
%
Let $Q =\{(x_1,c_1), (x_2,c_2), \ldots, (x_{k'},c_{k'})\} \subseteq S_{\ell}$ denote $k'$ elements in $\casc(S_0, \dots ,S_{\ell -1})\cap S_{\ell}.$  Since $\S$ is maximal, it follows from Observation \ref{obs1} that each of the sets $S_1, \ldots, S_{\ell}$ is an RB.
For $i=1, \dots ,k',$ let $(\S_{\ell i}, S_{\ell} - (x_i,c_i), c_i)$ be a root associated with $(x_i,c_i).$  It follows by Lemma \ref{lem-maxsubmax} that $\S_{\ell i}$ is $\eta$-maximal or $\eta$-submaximal.  

\begin{noname}\label{assert1}
For $i=1, \dots ,k',$ $\add(\S_{\ell i}, S_{\ell} - (x_i,c_i), c_i)$ contains at least $n-\beta$ elements.
\end{noname}

\begin{proof}
 Let $(y,c_i) \in \un_{c_i}(\S_{\ell i}).$  If $S_{\ell} - (x_i,c_i) + (y,c_i)$ is an RB, then the collection $\S_{\ell i}'$ obtained from $\S_{\ell i}$ by replacing the set $S_{\ell} - (x_i,c_i)$ by $S_{\ell} - (x_i,c_i) + (y,c_i)$ would be such that $\bm{\tau}(\S) \prec \bm{\tau}(\S_{\ell i}')$, contradicting the maximality of $\S.$  Thus $S_{\ell} - (x_i,c_i) + (y,c_i)$ is not an RB and hence $\underline{S}_\ell - x_i +y$ contains a circuit containing $y.$
By the girth condition, there are at least $n-\beta$ elements of $\underline{S}_{\ell}-x_i$ in such a circuit, and the corresponding elements in $S_{\ell} - (x_i,c_i)$ are $(\S_{\ell i},S_{\ell}-(x_i,c_i),c_i)$-swappable with witness $(y,c_i)$. 
Thus $|\swap(\S_{\ell i},S_{\ell}-(x_i,c_i),c_i)| \ge n-\beta.$   For all colours $c,$ let $\phi_c: \underline{S}_\ell \rightarrow B_c$ be an injection such that for all $x \in \underline{S}_\ell$, the set $\underline{S}_\ell-x + \phi_c(x)$ is independent.   Then for all colours $c$, there is an element $\phi_{c}(x_i) \in B_{c}$ for which $\underline{S}_{\ell}-x_i + \phi_c(x_i)$ is independent.  By Lemma~\ref{lem_swappable}, for all $(x,c) \in \swap(\S_{\ell i}, S_{\ell} - (x_i,c_i), c_i),$ the element $(\phi_c(x_i),c)$ is $(\S_{\ell i},S_{\ell}-(x_i,c_i),c_i)$-addable.   Thus $\add(\S_{\ell i}, S_{\ell} - (x_i,c_i), c_i)$ contains at least $n-\beta$ elements.
\end{proof} 

We denote the set of elements $(\phi_c(x_i),c)\in \add(\S_{\ell i}, S_{\ell} - (x_i,c_i), c_i)$ described above by $R_i$.   The sets $R_i,\ i = 1, \dots ,k'$ are disjoint since the functions $\phi_c$ are injections.   Thus there are at least $k'(n-\beta)$ elements in $Q' = \bigcup_iR_i.$  

\begin{noname}\label{assert2}
There exists
$i \in [k'],$  such that $r(\S_{\ell i}, S_\ell - (x_i,c_i), c_i) > k'.$
\end{noname}

\begin{proof}  For $i=1, \dots ,k'$, let $\mu_i: \S \rightarrow \S_{\ell i}$ be the natural bijection. By Observations \ref{obs1} and \ref{obs2}, all elements of $R_i$ belong to sets in $\S_{\ell i}.$  By the maximality of $k'$, none of the $\eta - \ell -1 = n-\alpha-\ell-1$ sets in $\S-\{S_0, S_1, \ldots, S_{\ell}\}$ contain more than $k'$ elements of $Q'$.  Therefore, for all $S \in \S- \{S_0, S_1, \ldots, S_{\ell}\}$, we have $|S\cap Q'| \leq k'$. 
 It follows that at least $k'(n-\beta)-k'(n-\alpha-\ell-1) = k'(\alpha-\beta+ \ell +1)$ elements of $Q'$ are contained in $\cup_{i=1}^{k'} \cup_{j=0}^{\ell-1} \,\mu_i(S_j)$. Since $\alpha \geq  k(k+1) + \beta \geq \ell(k'+1)+\beta$, it follows that there are at least $k'(\alpha -\beta) \ge \ell k' (k'+1)$ such elements in $Q'.$   Thus, by averaging, for some $i,j,$ there are at least $k'+1$ such elements of $Q'$ belonging to $\mu_i(S_j)$.  That is, for some $i,j,$  $\add(\S_{\ell i},S_{\ell}-(x_i,c_i),c_i)\cap \mu_i(S_j)$ has at least $k'+1$ elements.  Consequently, $r(\S_{\ell i}, S_{\ell} - (x_i,c_i), c_i) \ge k'+1$.
\end{proof} 
 
By ({\bf \ref{assert2}}), there is an $i\in[k']$ such that $r(\S_{\ell i}, S_\ell - (x_i,c_i), c_i) > k'.$
 For such a collection, $\S_{\ell i}$ is not $\eta$-maximal; for it was, then by the choice of $r_{\max}$, $r(\S_{\ell i}, S_{\ell} - (x_i,c_i), c_i) \le r_{\max} \le k'.$   Given that $\S_{\ell i}$ is either a $\eta$-maximal or $\eta$-submaximal collection, it follows that $\S_{\ell i}$ is $\eta$-submaximal.   It now follows by Lemma \ref{lem-maxsubmax}, that $t_{n-1} =0.$
 
In light of the above, we may assume that there is a $\eta$-submaximal collection
$\S'$ where 
\begin{itemize}
\item[i)] $(\S', S_0',b')$ is a root.
\item[ii)] $|S_0'| = n-1.$ 
\item[iii)] There exist distinct sets $S_0' S_1', \dots ,S_{\ell'}' \in \S'$ where $\ell' \le k',$ and
\item[iv)] $|S_{\ell'}' \cap \casc_{\S',b'}(S_0', \dots ,S_{{\ell'} -1}')| = k'' > k'.$
\item[v)] Subject to i)- iv), $k''$ is maximum.
\end{itemize}

 If $k'' \ge k,$ then the lemma is seen to be true.  Thus we may assume $k'' < k.$  By Observation \ref{obs2}, for $i = 1, \dots ,\ell',$ either $|S_i'| \ge i^{**}(\S')$ (if $\tau_{n-1}(\S') =1$) or $|S_i'| \ge n-1$ (if $\tau_{n-1}(\S') =2$).  Let $Q = \{ (x_1',c_1'), \dots ,(x_{k''}',c_{k''}') \}$ be a subset of $k''$ elements of $S_{\ell'}' \cap Q(S_0', \dots ,S_{{\ell'} -1}').$
For $i = 1, \dots, k'',$ let $(\S_{\ell' i}',S_{\ell'}' - (x_i',c_i'),c_i')$ be a root associated with $(x_i',c_i')$ and let $R_i' \subseteq \add(\S_{\ell' i}',S_{\ell'}' - (x_i',c_i'),c_i')$ be defined similarly to $R_i.$  As before, the sets $R_i',\ i = 1, \dots ,k''$ are disjoint and $|R_i'| \ge n-\beta,\ i = 1, \dots ,k''.$  Thus there are at least $k''(n-\beta)$ elements in $Q' =  \bigcup_iR_i'$. If one of the $n-\alpha -{\ell'}-1$ sets in $\S'-\{S_0', S_1', \ldots, S_{\ell'}'\}$ has at least $k''+1$ elements in $Q'$, this would contradict the maximality of $k''.$  
Therefore, for all $S' \in \S'- \{S_0', S_1', \ldots, S_{\ell}'\}$, we have $|S'\cap Q'| \leq k''$.  By similar arguments as before, there exists an $i\in [k'']$ for which $r(\S_{\ell' i}',S_{\ell'}' - (x_i',c_i'),c_i'))\ge k''+1.$ Clearly $\S_{\ell' i}'$ is not $\eta$-maximal since $k''+1>k'.$  Thus $\S_{\ell' i}'$ is $\eta$-submaximal.
%
%

Given that $\S'$ is $\eta$-submaximal, there are two possibilites: either $\tau_{n-1}(\S') =1,$ or $\tau_{n-1}(\S') =2.$  Suppose $\tau_{n-1}(\S') =2.$  Then $\tau_{n-2}(\S') >0$ and hence
$i^*(\S') = n-2.$  Also, by Observation \ref{obs2}, it follows that $|S_{\ell'}'| \ge n-1.$  If $|S_{\ell'}'| = n,$ then $\S_{\ell' i}'$ is seen to be $\eta$-submaximal, and $|S_{\ell' i}' - (x_i',c_i')| = n-1.$  It would now follow by the maximality of $k''$ that $r(\S_{\ell' i}', S_{\ell'}' - (x_i',c_i'), c_i') \le k'',$ yielding a contradiction.  If $|S_{\ell'}'| = n-1,$ then $\S_{\ell' i}'$ is seen to be $\eta$-maximal, a contradiction. Suppose that $\tau_{n-1}(\S') =1.$  Then $t_{n-2} = t_{n-1} = 0$ and by Observation \ref{obs2}, $|S_{\ell'}'| = i^{**}(\S') = i^*(\S) +1$ or $|S_{\ell'}'| =n.$
When $|S_{\ell' i}'| = i^*(\S) +1,$  
$\S_{\ell' i}'$ is $\eta$-maximal .   Thus $|S_{\ell' i}'| = n$ and  $\S_{\ell' i}'$ is $\eta$-submaximal. In this case, we have that 
$r(\S_{\ell' i}', S_{\ell'}' - (x_i',c_i'), c_i') \le k'',$ yielding a contradiction.  This completes the proof.
\end{proof}

\section{Finding rainbow bases when $\bB$ is disjoint}

Our goal in this section is to prove Theorem~\ref{thm_main_disjoint}.
The following definition applies to all base sequences $\bB$, disjoint or not.
Let $\S \in \mathbb{S}(\bB)$ and let $S,S' \in \S$ be distinct RIS's.  For elements $(x,c) \in S$ and $(x',c') \in S'$, we write $(x,c) \to (x',c')$ if $\underline{S}' - x' + x$ is independent.

\begin{lemma}
Suppose, for $i = 1, \dots ,k,$ there exist elements $(x_i,c_i) \in S$ and $(x_i',c_i) \in S'$ such that for all $(x_i,c_i) \in S,$ there is at least one $(x_j', c_j) \in S'$ for which $(x_i,c_i) \to (x_j',c_j).$
Then for some nonempty subset $I \subseteq \{ 1,2, \dots ,k \},$ the set $S' - \{ (x_i', c_i) \ \big| \ i \in I \} + \{ (x_i,c_i)\ \big| \ i \in I \}$ is an RIS.\label{lem-exchange} 
\end{lemma}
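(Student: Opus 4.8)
The plan is to argue by induction on $k$. The base case $k=1$ is immediate: the hypothesis gives $(x_1,c_1)\to(x_1',c_1)$, so $\underline{S'}-x_1'+x_1$ is independent, and since $S'-(x_1',c_1)+(x_1,c_1)$ has the same colour set as $S'$ it is an RIS; thus $I=\{1\}$ works. Assume now $k\ge 2$, that the statement holds for all smaller values, and, for contradiction, that no nonempty $I\subseteq[k]$ works. Since $S$ and $S'$ are RIS's, $x_1,\dots,x_k$ are pairwise distinct and $x_1',\dots,x_k'$ are pairwise distinct; I will also dispose at the outset of the (easily handled) degenerate situations in which some $x_i$ lies in $\underline{S'}$, so that ``$\{i\}$ does not work'' becomes equivalent to ``$\underline{S'}-x_i'+x_i$ is dependent''. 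In particular, for every $i$ we may pick $\sigma(i)\neq i$ with $(x_i,c_i)\to(x_{\sigma(i)}',c_{\sigma(i)})$, which defines a fixed-point-free map $\sigma\colon[k]\to[k]$.

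Viewing $\sigma$ as a functional digraph, it contains a directed cycle $C$ of length $m\ge 2$. If $m<k$, then the sub-family of pairs indexed by $C$ again satisfies the hypothesis of the lemma (each index of $C$ is sent by $\sigma$ into $C$), so the induction hypothesis yields a nonempty $I\subseteq C\subseteq[k]$ that works, contradicting our assumption. Hence every admissible choice of $\sigma$ is a single $k$-cycle (a functional digraph on $[k]$ whose shortest cycle has length $k$ must be a cyclic permutation of $[k]$). A short switching argument then forces more: if some $x_i$ had two distinct neighbours $x_a',x_b'$ among $x_1',\dots,x_k'$, one could toggle $\sigma(i)$ between $a$ and $b$ while keeping $\sigma$ admissible, and one of the two resulting maps would fail to be surjective, hence could not be a cyclic permutation. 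So each $x_i$ has a unique such neighbour, and after relabelling we may assume $(x_i,c_i)\to(x_j',c_j)$ holds precisely when $j\equiv i+1\pmod k$.

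The final step is a fundamental-circuit computation. We may assume $x_i\in\cl_M(\underline{S'})$ for all $i$ (otherwise $\underline{S'}-x_i'+x_i$ is automatically independent and $I=\{i\}$ works), so we pass to $M':=M|\cl_M(\underline{S'})$, in which $\underline{S'}$ is a basis, and set $A'':=\underline{S'}\setminus\{x_1',\dots,x_k'\}$. For each $i$, the fundamental circuit of $x_i$ with respect to $\underline{S'}$ in $M'$ meets $\{x_1',\dots,x_k'\}$ in exactly $x_{i+1}'$: it contains $x_{i+1}'$ because $\underline{S'}-x_{i+1}'+x_i$ is a basis of $M'$, and it avoids every other $x_j'$ because $\underline{S'}-x_j'+x_i$ is not. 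Since every element of a circuit lies in the closure of the remaining elements, $x_{i+1}'\in\cl_{M'}(\{x_i\}\cup A'')$. Taking the union over $i$, the set $\{x_1,\dots,x_k\}\cup A''$ spans $\{x_1',\dots,x_k'\}$ and hence spans $M'$; as it has $|A''|+k=|\underline{S'}|$ elements, i.e. as many as the rank of $M'$, it is a basis of $M'$. Therefore $(\underline{S'}\setminus\{x_1',\dots,x_k'\})\cup\{x_1,\dots,x_k\}$ is independent in $M$; combined with the fact that the swap leaves the colour set of $S'$ unchanged and (the degenerate cases having been removed) keeps $\pi_1$ injective, this shows $I=[k]$ works, contradicting our assumption and completing the induction.

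I expect the matroid core above to be brief; the genuine work will be the bookkeeping that legitimises the reduction --- verifying that a short $\sigma$-cycle really produces a valid smaller instance of the lemma, handling the degenerate cases where some $x_i$ already belongs to $\underline{S'}$ so that the swapped set is a bona fide RIS and not merely an independent set, and confirming the rigid structure ``$(x_i,c_i)\to(x_j',c_j)$ iff $j\equiv i+1$'' in the putative minimal counterexample.
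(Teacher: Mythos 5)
Your proof is correct (modulo the degeneracies you flag), and it reaches the same structural endpoint as the paper's proof — a chord-free cycle in the $\to$-digraph — but by a genuinely different route. The paper argues directly: after disposing of the case $(x_i,c_i)\to(x_i',c_i)$, it chooses a cycle $I=\{i_1,\dots,i_\ell\}$ of \emph{minimum length} in the $\to$-digraph, records the back-chord exclusions that minimality forces, and then asserts in a single sentence that the swap along $I$ yields an RIS. You instead run an induction-plus-contradiction: a minimal counterexample forces every admissible $\sigma$ to be a single $k$-cycle, and your switching argument extracts the stronger rigidity that each $x_i$ has a \emph{unique} $\to$-neighbour among $x_1',\dots,x_k'$. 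That full rigidity then feeds a clean fundamental-circuit/closure computation showing $(\underline{S'}\setminus\{x_1',\dots,x_k'\})\cup\{x_1,\dots,x_k\}$ spans $\cl_M(\underline{S'})$ and hence is a basis by cardinality — a step the paper leaves entirely to the reader and which is worth spelling out. The trade-off is that your route is somewhat more roundabout (the paper never needs the ``unique neighbour'' rigidity; it only needs the triangular structure from minimality), but your matroid core is fully explicit where the paper's is not. One caveat, which applies to \emph{both} arguments: the lemma as stated is actually false when some $x_i\in\underline{S'}$ — e.g.\ in the rank-$2$ uniform matroid on $\{1,2,3\}$ with $S=\{(1,a),(2,b)\}$, $S'=\{(2,a),(3,b)\}$, $k=1$, $x_1=2$, $x_1'=3$, the hypothesis holds (indeed $(2,b)\to(3,b)$) but $S'-(3,b)+(2,b)$ fails $\pi_1$-injectivity. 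You flag this restriction explicitly and work in the non-degenerate case; the paper does not mention it, though at both places the lemma is invoked (in the proofs of Theorem~\ref{thm_main_disjoint} and of Lemma~\ref{lem-qbound}) the hypotheses in force do guarantee $x_i\notin\underline{S'}$, so the omission is harmless. You were right to single this out as the place where the bookkeeping must be done.
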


\begin{proof}
First, if for some $i$ we have $(x_i,c_i) \to (x_i',c_i)$, then $S' - (x_i',c_i) + (x_i,c_i)$ is an RIS. In this case, one can take $I = \{ i \}.$  Thus we may assume, for all $i$, $(x_i,c_i) \not\to (x_i',c_i).$  Given that for all $i \in \{ 1, \dots ,k \},$ there is a $j$ such that $(x_i,c_i) \to (x_j',c_j),$ there is a subset $I = \{ i_1, i_2, \dots ,i_\ell \}$ such that $(x_{i_j}, c_{i_j}) \to (x_{i_{j+1}}', c_{i_{j+1}}),$ for $j = 1, \dots, \ell -1$, and $(x_{i_\ell}, c_{i_\ell}) \to (x_{i_1}', c_{i_1}).$  Choose $I$ so that $|I| = \ell$ is minimum.  Then by minimality, we have that for all $i_{j'}, i_j \in I\backslash \{ i_\ell \},$ if $i_{j'} < i_j,$ then  $(x_{i_j},c_{i_j}) \not\to (x_{i_{j'}}',c_{i_{j'}})$.  Furthermore,
$(x_{i_\ell}, c_{i_\ell}) \not\to (x_{i_j}', c_{i_j}),$ for $j= 2, \dots ,\ell.$  Now it is seen that $S' - \{ (x_{i_j}',c_{i_j})\ \big| \ j = 1, \dots ,\ell \} + \{ (x_{i_j},c_{i_j})\ \big| \ j = 1, \dots ,\ell \}$ is an RIS.
\end{proof}

As before, let $\bm{\tau}_{\eta} = (t_1, \dots ,t_n)$ be the signature of a $\eta$-maximal collection. 

\subsection{The proof of Theorem~\ref{thm_main_disjoint}}

Let $\alpha = (2\beta +2)(2\beta +1) + \beta = 4\beta^2 + 7\beta +2$ and let $\eta = n -\alpha.$  Assume that $n\ge \alpha + 3 = 4\beta^2 + 7\beta + 5.$  Suppose $t(\bB) < \eta-2 = n - 4\beta^2 - 7\beta -4.$   It follows by Lemma \ref{lem-klemma} that there
exists
\begin{itemize}
\item[i)] a collection $\S \in \mathbb{S}_\eta(\bB),$ either $\eta$-maximal or $\eta$-submaximal, and
\item[ii)] a root $(\S,S_0,b),$ and distinct sets $S_0, S_1, \ldots, S_{\ell}$ in $\S$, where $|S_0| = i^*(\S),$ and $S_{\ell}$ contains at least $2\beta+1$ elements which are cascadable with respect to $S_0, S_1, \ldots, S_{\ell-1}$. 
\end{itemize}
 
 As before, let $\bm{\tau}_\eta = (t_1, \dots ,t_n)$ be the signature of a $\eta$-maximal collection.  Let $i^* = \max \{ 0 \le i \le n-1\ \big| \ t_i >0 \}.$  Let $(x_i', c_i),\ i = 1, \dots ,2\beta +1$ be $2\beta +1$ elements in $S_{\ell},$ as described in ii).  Each of the sets $S_1, S_2, \ldots, S_{\ell}$ is such that $|S_i| = n,$ if $\S$ is $\eta$-maximal (by Observation \ref{obs1}), and $|S_i| \ge i^{**}(\S),$ if $\S$ is $\eta$-submaximal (by Observation \ref{obs2}).  We have $\tau_n(\S) \le t(\bB) < \eta-2.$  Then there exists
$S_{\ell+1} \in \S - \{S_0, S_1, \ldots, S_{\ell}\}$ having size $|S_{\ell+1}| \leq i^*(\S)$, if $\S$ is $\eta$-maximal, and $|S_{\ell +1}| \leq n-2,$ if $\S$ is $\eta$-submaximal.   Since the girth $g\ge n-\beta +1,$ we have, for all $S\in \S,$ $|S| \ge n-\beta.$  Thus there are at least $\beta+1$ elements in $S_{\ell +1}$ having one of the colours $c_i,\  i = 1, \dots ,2\beta +1.$  Without loss of generality, we may assume $(x_i,c_i),\ i = 1, \dots ,\beta +1$ are elements in $S_{\ell +1}.$  We will show that there exists $I \subseteq \{ 1, \dots ,\beta +1\}$ such that $S_{\ell}' = S_{\ell} - \{ (x_i',c_i) \ \big| \ i\in I \} + \{ (x_i,c_i) \ \big| \ i\in I \}$ is an RIS.  Clearly, if for some $i \in [\beta+1],$$S_{\ell} - (x_i',c_i) + (x_i,c_i)$ is an RIS, then it is true.  Thus we may assume that, for $i = 1, \dots ,\beta +1,$  $S_{\ell} - (x_i',c_i) + (x_i,c_i)$ is not an RIS.
Therefore, for $i = 1, \dots ,\beta +1,$  
$\underline{S}_{\ell} + x_i$ contains a circuit (containing $x_i$).
Since $g \ge n-\beta +1$, such a circuit also contains at least $n-\beta$ elements of $\underline{S}_{\ell}.$  Thus it is seen that for $i = 1, \dots ,\beta +1,$ there is a $j\in \{ 1, \dots ,\beta+1 \}$ such that $(x_i,c_i) \to (x_j', c_j).$  It now follows by Lemma \ref{lem-exchange} that there exists $I \subseteq \{ 1, \dots ,\beta +1\}$ such that $S_{\ell}' = S_{\ell} - \{ (x_i',c_i) \ \big| \ i\in I \} + \{ (x_i,c_i) \ \big| \ i\in I \}$ is an RIS.  Without loss of generality, we may assume $1 \in I.$  Let $(\S_{\ell}, S_{\ell} - (x_1',c_1), c_1)$ be a root associated with $(x_1',c_1).$  Let $\S_{\ell}'$ be the collection obtained from $\S_{\ell}$ by replacing $S_{\ell}- (x_1',c_1)$ by $S_{\ell}'$ and $S_{\ell +1}$ by $S_{\ell + 1} -  \{ (x_i,c_i) \ \big| \ i\in I \}.$  Suppose $\S$ is $\eta$-maximal.  If $i^*(\S) = n-1,$ then it is seen that $\tau_n(\S_\ell') = t_n+1.$ If $i^*(\S) < n-1,$ then $\tau_n(\S_\ell') = t_n$ and $i^*(\S_\ell') = |S_0| + 1 = i^*(\S) +1.$ In either case, $\bm{\tau}_\eta \prec \bm{\tau}(\S_{\ell}'),$ a contradiction. 
  On the other hand, suppose $\S$ is $\eta$-submaximal. 
 Then $t_{n-1} = 0$ and $\tau_n(\S) = t_n-1.$  Now we see that $\tau_n(\S_{\ell}') = \tau_n(\S) +1 = t_n$ and $i^*(\S_\ell') > i^*.$   Thus $\bm{\tau}_{\eta} \prec \bm{\tau}(\S_{\ell}')$,  a contradiction.
We conclude that $\tau_n(\bB) \ge \eta - 2 =  n - 4\beta^2 - 7\beta -4.$

\section{The Overlapping Case}

Our goal in this section is to prove Theorem \ref{the_main_overlapping}.  A key element in the proof of Theorem \ref{thm_main_disjoint} was using the girth requirement to guarantee that roots have a large number of swappable elements.  In the case where $\bB$ is a $\kappa$-overlapping base sequence, the girth requirement no longer guarantees this.  To get around this problem, we will describe a process which transforms a {\it bad root} into a {\it good root}.
\subsection{Obtaining a good root}

Suppose that $\S \in \mathbb{S}(\bB)$, where $\bB$ is a $\kappa$-overlapping base sequence of $M$. A root $(\S, S, b)$ is {\bf good} if there exists an element $(y,b) \in \un_b(\S)$ such that $y \not\in \underline{S}$.  Note that the existence of such an element $(y,b)$ together with the girth requirement $g \ge n-\beta +1$ assures that $\swap(\S,S,b)$ will have at least $n-\beta$ elements.  If no such element $(y,b)$ exists, then the root is said to be {\bf bad}.
Using a graph, we outline a process for transforming a bad root into a good root whereby we exchange some elements in $S$ with unused elements so as to obtain a set $S'$ missing a colour $b'$.  The set $S'$ will have the convenient property that $\underline{S}' = \underline{S}.$  To describe this,
suppose $(\S, S, b)$ is a root. We form a graph, denoted $G=G(\S, S, b),$ whose vertices will consist of a base vertex, a subset of elements from $S$, and a subset of elements from $U - F(\S)$ which we call {\it terminal vertices} in $G.$

\begin{itemize}
\item Level $0$ consists of just one vertex, the base vertex denoted by the ordered pair $(O,b)$.
\item The next level of the graph, level $1$, is constructed as follows:  for all $(y,b) \in \un_b(\S)$, if $(y,c)\in S,$ for some $c,$ then $(y,c)$ becomes a vertex on level $1$ which is joined to $(O,b)$;  otherwise, if $y\not\in \underline{S}$, then $(y,b)$ becomes a (terminal) vertex in level $1$ which is joined to $(O,b).$
\item Suppose we have constructed $G$ up to level $\ell\ge 1$ and there are no terminal vertices in levels $1, \dots ,\ell.$ Then an element $(x',c') \in S$ is a vertex in level $\ell+1$ if $(x',c')$ is not already a vertex, and there exists a vertex $(x,c)$ 
in level $\ell$ for which $(x',c) \in \un_c(\S).$  In this case we join $(x,c)$ to $(x',c')$ by an edge.  An element $(y,c) \in \un_{c}(\S)$ is a terminal vertex in level $\ell +1$ if $y \not\in \underline{S}$ and there exists a vertex $(x,c)$ in level $\ell.$  We join $(x,c)$ and $(y,c)$ by an edge. 
\item The construction of the levels in $G$ stops when we complete a level containing a terminal vertex or no new vertices are added to a level.
\item For $\ell \ge 0,$ we denote by $V_{\ell}$ the set of all vertices in levels $0,\dots, \ell$; that is, $V_\ell$ is the set vertices in $G$ lying at distance at most $\ell$ from $(O,b).$
\end{itemize}

We want to guarantee that the construction of $G$ always ends with the inclusion of terminal vertices.  Asssuming $|\S| \le n-\alpha,$ the next lemma shows that this happens whenever $\alpha > \kappa.$

\begin{lemma}
Suppose $\S \in \mathbb{S}_{n-\alpha}(\bB)$ where $\alpha > \kappa$.  For all levels $\ell$ in $G$, if $V_\ell$ contains no terminal vertices, then $|V_{\ell +1}| \ge |V_\ell| \cdot \frac {\alpha}{\kappa} > |V_\ell |$.  Moreover, the graph $G$ must contain terminal vertices.
\label{lem-levelbound}
\end{lemma}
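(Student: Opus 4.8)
The plan is to bound the size of level $\ell+1$ from below in terms of the size of the levels constructed so far, using a double-counting argument driven by the $\kappa$-overlapping hypothesis. First I would set up the counting: each vertex $(x,c)\in V_\ell$ that is a non-terminal (hence an element of $S$) wants a "successor", namely an unused coloured element $(x',c)\in\un_c(\S)$ which then becomes a vertex of $G$ (either a new level-$(\ell+1)$ vertex if $(x',c')\in S$ for some $c'$, or a terminal vertex if $x'\notin\underline S$). Since $|\S|\le n-\alpha$ and each $S\in\S$ has size at most $n$, the number of coloured elements of colour $c$ that are \emph{used} by $\S$ is at most $|\S|\le n-\alpha$; as there are $n$ coloured elements of colour $c$ in total (one for each element of $B_c$), we get $|\un_c(\S)|\ge \alpha$. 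So every vertex $(x,c)\in V_\ell$ has at least $\alpha$ candidate successors $(x',c)$.

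Next I would observe that the only reason a candidate successor $(x',c)$ fails to be a \emph{new} vertex added at level $\ell+1$ is that $(x',c)$ (or rather the element $x'$, appearing with some colour) is already a vertex in $V_\ell$ — and crucially, for it to already be a vertex of $G$, the element $x'$ must lie in $\underline{V_\ell}$, which has size at most $|V_\ell|$ (each vertex contributes at most one matroid element). Here is where the $\kappa$-overlap comes in: a fixed matroid element $x'$ can serve as the "target" $(x',c)$ for at most $\kappa$ distinct colours $c$, since $x'$ lies in at most $\kappa$ of the bases $B_c$. Hence the number of pairs $\big((x,c)\in V_\ell,\ (x',c)\in\un_c(\S)\big)$ with $x'\in\underline{V_\ell}$ is at most $|V_\ell|\cdot\kappa$. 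Since the total number of (vertex, candidate-successor) pairs is at least $|V_\ell|\cdot\alpha$, at least $|V_\ell|(\alpha-\kappa)$ of these pairs have $x'\notin\underline{V_\ell}$, so they force the inclusion of a fresh vertex. Grouping these by the target element $x'$, and again using that a single element $x'$ accounts for at most $\kappa$ colours (hence at most $\kappa$ such pairs), the number of \emph{distinct} new vertices added is at least $|V_\ell|(\alpha-\kappa)/\kappa$; adding back $V_\ell$ itself gives $|V_{\ell+1}|\ge |V_\ell|\cdot \tfrac{\alpha}{\kappa}$. Since $\alpha>\kappa$, this is strictly larger than $|V_\ell|$.

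For the final sentence, I would argue by contradiction: if $G$ never produces a terminal vertex, the construction only stops when no new vertices are added to a level, i.e. $|V_{\ell+1}|=|V_\ell|$ for some $\ell$ — but the bound just proved gives $|V_{\ell+1}|\ge |V_\ell|\cdot\tfrac\alpha\kappa>|V_\ell|$ whenever $V_\ell$ has no terminal vertex, so the sizes $|V_0|<|V_1|<|V_2|<\cdots$ strictly increase forever, which is impossible since $G$ has at most $|U|=n^2$ vertices. Hence a terminal vertex must eventually appear.

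The main obstacle I anticipate is bookkeeping the two places where $\kappa$ is used — once to bound how many colours a repeated element can block, and once to bound how many distinct new vertices a batch of successor-pairs actually yields — and making sure the definition of "vertex already in $G$" in the level construction is correctly matched to "element lies in $\underline{V_\ell}$" (in particular handling the subtlety that a level-$1$ vertex can be of the form $(y,c)$ with $(y,b)\in\un_b(\S)$ but $y\in\underline S$, versus a terminal vertex $(y,b)$ with $y\notin\underline S$). Everything else is a clean averaging argument, and the strict inequality $\alpha/\kappa>1$ does the rest.
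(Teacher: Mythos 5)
Your proposal is correct and follows essentially the same counting argument as the paper: bound $|\un_c(\S)|\ge\alpha$ via $|\S|\le n-\alpha$, sum over the $|V_\ell|$ distinct colours in $\pi_2(V_\ell)$ to get at least $|V_\ell|\cdot\alpha$ unused coloured elements, and use the $\kappa$-overlap to bound how many of these can share an underlying matroid element. The only cosmetic difference is that you split the count into pairs landing in $\underline{V_\ell}$ versus genuinely new targets and then add $|V_\ell|$ back, whereas the paper directly observes that the $\ge|V_\ell|\alpha/\kappa$ distinct underlying elements each yield a vertex of $V_{\ell+1}$; both yield the identical bound and the same finiteness contradiction for the last claim.
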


\begin{proof}
Assume that $V_{\ell}$ has no terminal vertices.   Then $(x',c') \in V_{\ell +1}$ is a non-terminal vertex if and only if for some $(x,c) \in V_\ell,$ $(x',c) \in \un_c(\S).$  
On the other hand, $(x',c)$ is a terminal vertex if for some $(x,c) \in V_{\ell},$ we have $(x',c) \in \un_{c'}(\S)$ and $x' \not\in \underline{S}.$  Since $\bigcup_{c \in \pi_2(V_\ell)}\un_c(\S)$ has at least $|V_\ell| \cdot \alpha$ elements, and seeing as no element of $M$ can have more than $\kappa$ colours, it follows that
$|V_{\ell +1}| \ge |V_\ell| \cdot \frac {\alpha}{\kappa} > |V_\ell |.$   To prove that $G$ must have terminal vertices, suppose to the contrary that $G$ has no terminal vertices.   Let $\ell$ be the highest level in $G.$  Then $V_\ell$ has only non-terminal vertices, and no new vertices are added to the next level.  However, by the above, there must be some vertices added to the next level since $|V_{\ell+1}| > |V_{\ell}|.$  Thus $G$ must contain terminal vertices.
\end{proof}

Assume that $\alpha > \kappa.$  Then $G$ has terminal vertices.
Let $(O,b),(x_1,c_1),(x_2,c_2),\ldots,(x_{h}, c_{h}),(x_{h+1},c_{h})$ be a shortest path from $(O,b)$ to a terminal vertex $(x_{h+1},c_h).$ Then the set 
$$S'=S - \{(x_1,c_1),(x_2,c_2),\ldots,(x_{h}, c_{h})\} + \{(x_1,b),(x_2,c_1),\ldots,(x_h,c_{h-1})\}$$ is an RIS missing the colour $b'=c_h$ and $x_{h+1} \not\in \underline{S}'.$  Note that $\underline{S}'=\underline{S}$, since the only thing changing is the colour data; this ensures that $\underline{S}'$ is indeed an independent set. Letting $\S' = \S - S + S'$, we have obtained a good root $(\S', S', b')$.
We denote this operation with
\[
(\S, S, b)\xrightarrow []{\mathrm{good}}
(\S', S', b').\]

\subsection{Good root cascades}

Let $(\S_0, S_0, c_0)$ be a root, and let $S_0, S_1, \ldots, S_k \in \S$ be a sequence of distinct RIS's. Then we say that an element $(x,c) \in U$ is $(\S,S_0,c_0)$-{\bf good-cascadable} with respect to $S_0, S_1, \ldots, S_k$ if there is a sequence of colours $c'_0, c_1, c'_1, c_2, c'_2, \ldots c_k',c_k$, and a sequence of elements $(x_1,c_1) \in S_1, (x_2,c_2) \in S_2, \ldots, (x_k,c_k) \in S_k$ such that we can perform the sequence of operations, as illustrated in Figure \ref{g_cascade}, which we call a {\bf good root cascade}.  Afterwards, we will have $(x,c) \in \add(\S_k', S_k', c_k')$. We shall simply say that $(x,c)$ is good-cascadable with respect to $S_0, S_1, \ldots, S_k$ when the root $(\S,S_0,c_0)$ is implicit.  Let $(\S_k',S_k',c_k') \xrightarrow[]{(x,c)} (\S_{k}'', S_k' - (x,c),c).$  The root $(\S_k'', S_k' - (x,c),c)$ is called a {\it root associated with} $(x,c).$


\begin{figure}
\centering
\begin{tikzpicture}
\node (R0) at (2,0) {$(\S, S_0,c_0)$};
\node (R0g) at (8,0) {$(\S', S_0', c_0')$};
\node (R1) at (2,-3) {$(\S_1,S_1 - (x_1,c_1),c_1)$};
\node (R1g) at (8,-3) {$(\S_1',S_1',c_1')$};
\node (Rk) at (2,-9) {$(\S_k,S_k - (x_k,c_k),c_k)$};
\node (Rkg) at (8,-9) {$(\S_k', S_k',c_k')$};

\node (dots1) at (2,-6)  {\Large{$\cdots$}};
\node (dots2) at (5,-6)  {\Large{$\cdots$}};
\node (dots3) at (8,-6) {\Large{$\cdots$}};

\draw [->] (R0) to node[pos=0.5,above] {good} (R0g);
\draw [->] (R1) to node[pos=0.5,above] {good} (R1g);
\draw [->] (Rk) to node[pos=0.5,above] {good} (Rkg);
\draw[->,rounded corners=5pt] (R0g) |- (5,-1.5) -- node[pos=0.5,above] {$(x_1,c_1)$}  (2,-1.5) -> (R1);
\draw[->,rounded corners=5pt] (R1g) |- (5,-4.5) -- node[pos=0.5,above] {$(x_2,c_2)$}  (2,-4.5) -> (dots1);
\draw[->,rounded corners=5pt] (dots3) |- (5,-7.5) -- node[pos=0.5,above] {$(x_k,c_k)$}  (2,-7.5) -> (Rk);
\end{tikzpicture}
\caption{This sequence of operations, known as a good root cascade, applied to the initial root $(\S_0, S_0,c_0)$, results in a good root $(\S_k', S_k',c_k')$ such that the good-cascadable element $(x,c)$ is $(\S_k',S'_k,c'_k)$-addable.}
\label{g_cascade}
\end{figure}
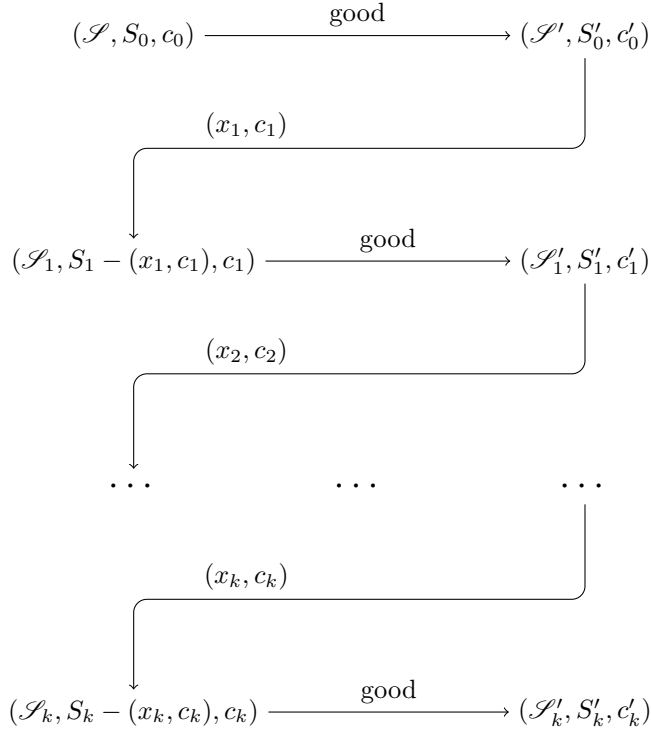


We denote the set of all $(\S,S_0,c_0)$-good-cascadable elements with respect to $S_0, S_1, \ldots, S_k$ by $\cascgood_{\S,c_0}(S_0, S_1, \ldots, S_k)$.  When the root is implicit, we shall simply write 
$\cascgood(S_0, S_1, \ldots, S_k)$.  

Lemma~\ref{lem-klemma} applies to good root cascades as well.  The proof of the next lemma is essentially the same as that of Lemma~\ref{lem-klemma} and we omit its proof.

\begin{lemma}
Let $k$ and $\alpha$ be positive integers where $\alpha \geq k(k+1)+\beta$ and let $\eta = n -\alpha.$  Suppose $\bB$ is a $\kappa$-overlapping sequence where $\kappa < \alpha.$     If $t(\bB) < \eta -1,$ then there is a collection $\S \in \mathbb{S}_\eta(\B)$, which is $\eta$-maximal or $\eta$-submaximal, for which there exists a root $(\S,S_0,b)$ and distinct sets $S_0,S_1, \dots ,S_l \in \S$, $\ell \le k$, such that $|S_\ell \cap \cascgood(S_0, \dots ,S_{\ell -1})| \ge k$.
\label{lem-klemma2}
\end{lemma}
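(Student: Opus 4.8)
The plan is to mirror the proof of Lemma~\ref{lem-klemma} almost verbatim, replacing each ordinary root-cascade step by a good root cascade and each appeal to the girth-based swappability bound by the corresponding bound for good roots. Concretely, I would begin as in Lemma~\ref{lem-klemma}: since $t(\bB) < \eta - 1$, every collection in $\mathbb{S}_\eta(\bB)$ has at least two sets that are not RB's. Among all $\eta$-maximal collections choose $\S$ maximizing $r(\S)$ (where, as before, $r(\S,S_0,b)$ is the largest value of $|\add(\S,S_0,b)\cap S'|$ over $S'\in\S-S_0$, now computed with respect to a \emph{good} root associated to the relevant cascade, and $r(\S)$ is the max over roots with $|S_0|=i^*(\S)$). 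Fix a good root $(\S,S_0,b)$ attaining $r_{\max}=r(\S)$; if $r_{\max}\ge k$ we are done, so assume $r_{\max}<k$. Then pick sets $S_0,\dots,S_\ell$ with $\ell\le k$ such that the number $k'$ of elements of $S_\ell$ good-cascadable with respect to $S_0,\dots,S_{\ell-1}$ is largest possible; again assume $k'<k$.

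The key substitution is in the analogue of assertion~(\textbf{\ref{assert1}}). For each good-cascadable $(x_i,c_i)\in S_\ell$, the associated root $(\S_{\ell i},S_\ell-(x_i,c_i),c_i)$ is a \emph{good} root (this is exactly the content of the good-root-cascade construction and of Lemma~\ref{lem-levelbound}, which guarantees the cascade can always be extended through a good-root step, using $\kappa<\alpha$). Because the root is good, there is an unused $(y,c_i)\in\un_{c_i}(\S_{\ell i})$ with $y\notin\underline{S}_\ell$, and then by maximality of $\S$ the set $\underline{S}_\ell-x_i+y$ must contain a circuit through $y$; the girth bound $g\ge n-\beta+1$ forces at least $n-\beta$ elements of $\underline{S}_\ell-x_i$ into that circuit, so $|\swap(\S_{\ell i},S_\ell-(x_i,c_i),c_i)|\ge n-\beta$. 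Applying Lemma~\ref{lem_swappable} together with the injections $\phi_c$ from Lemma~\ref{lem_injection} then yields a set $R_i\subseteq\add(\S_{\ell i},S_\ell-(x_i,c_i),c_i)$ with $|R_i|\ge n-\beta$, and the $R_i$ are pairwise disjoint since the $\phi_c$ are injective. From here the counting argument of assertion~(\textbf{\ref{assert2}}) goes through unchanged: with $Q'=\bigcup_i R_i$ of size $\ge k'(n-\beta)$, the $n-\alpha-\ell-1$ sets outside $\{S_0,\dots,S_\ell\}$ each meet $Q'$ in at most $k'$ elements, so at least $k'(\alpha-\beta+\ell+1)\ge \ell k'(k'+1)$ elements of $Q'$ lie in $\bigcup_{i,j}\mu_i(S_j)$ (using $\alpha\ge k(k+1)+\beta$), and averaging gives some $i,j$ with $|\add(\S_{\ell i},S_\ell-(x_i,c_i),c_i)\cap\mu_i(S_j)|\ge k'+1$, i.e.\ $r(\S_{\ell i},\cdot,\cdot)>k'$.

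The remainder is the submaximal bookkeeping, identical to Lemma~\ref{lem-klemma}: since $r(\S_{\ell i},\cdot,\cdot)>k'\ge r_{\max}$, the collection $\S_{\ell i}$ cannot be $\eta$-maximal, so by Lemma~\ref{lem-maxsubmax} it is $\eta$-submaximal and $t_{n-1}=0$; one then sets up an $\eta$-submaximal collection $\S'$ with a good root $(\S',S_0',b')$, $|S_0'|=n-1$, and a maximal good-cascadable count $k''>k'$, and runs the same disjointness-plus-averaging argument (using Observation~\ref{obs2} for the sizes of the $S_i'$) to derive a contradiction in each of the cases $\tau_{n-1}(\S')=1$ and $\tau_{n-1}(\S')=2$, via Lemma~\ref{lem-maxsubmax}. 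I expect the only real point needing care — and the reason the paper says ``essentially the same'' rather than ``the same'' — is verifying that the good-root construction composes cleanly inside a cascade, i.e.\ that after inserting a good-root step the resulting root is still $\eta$-maximal or $\eta$-submaximal with $|S_0'|=i^*(\cdot)$ and that $\underline{S}'=\underline{S}$ so the girth argument still applies to the \emph{same} independent set; once that is in place, every other step is a transcription of the disjoint case. Hence the lemma follows. $\square$
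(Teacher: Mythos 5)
Your proposal is exactly the transcription the paper intends: the paper explicitly omits the proof, stating it is ``essentially the same'' as that of Lemma~\ref{lem-klemma}, and you correctly identify that the only substantive change is replacing each root by a good root so that the girth bound can be invoked under the $\kappa$-overlapping hypothesis. The one phrasing to tighten is your claim that the associated root $(\S_{\ell i}, S_\ell-(x_i,c_i), c_i)$ \emph{is} a good root; in general it need not be, and what one actually does (as the paper does explicitly in the proof of Theorem~\ref{the_main_overlapping}) is apply the transformation $(\S_{\ell i}, S_\ell-(x_i,c_i), c_i) \xrightarrow{\mathrm{good}} (\S_{\ell i}', S_{\ell i}', c_i')$, which by construction satisfies $\underline{S}_{\ell i}' = \underline{S}_\ell - x_i$, so the circuit/girth argument applies to the same underlying independent set and the swappability count $\ge n-\beta$ still holds. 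You flag this composition issue yourself at the end, so the argument is sound once that sentence is corrected; the rest of the counting and submaximal bookkeeping carries over verbatim as you say.
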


In the following lemma, let $\eta = n-\alpha$ and $\bm{\tau}_\eta = (t_1, \dots ,t_n)$ be the signature for an $\eta$-maximal collection.  We shall assume that $M$ has girth $g\ge n-\beta +1$ and $\bB$ is $\kappa$-overlapping where $\alpha > \kappa.$

\begin{lemma}\label{lem-qbound}
Let $\S$ be an $\eta$-maximal or $\eta$-submaximal collection and let $S_0, \dots ,S_{k-1}$ be distinct sets in $\S$ where $|S_0| = i^*(\S).$  Suppose $|\cascgood_{\S,b}(S_0, \ldots ,S_{k-1}) \cap S|=q \ne 0$ for some set $S\in \S - \{ S_0, \ldots ,S_{k-1} \}$. Then for all $S' \in \S - \{ S_0, \dots ,S_{k-1}, S \},$ we have $|\underline{S} \cap \underline{S'}| \ge q - 2\beta$ if:
\begin{itemize}
\item[i)] $\S$ is $\eta$-maximal and $|S'| \le n-1$, or
\item[ii)] $\S$ is $\eta$-submaximal and $|S'| < \left\{ \begin{array}{lr} n-1& \mathrm{if\ } \tau_{n-1}(\S) =2\\  i^{**}(\S) & \mathrm{if}\ \tau_{n-1}(\S) =1\end{array}\right.$
\end{itemize}
\end{lemma}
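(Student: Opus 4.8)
The plan is to mimic the counting argument used inside the proof of Lemma~\ref{lem-klemma}/\ref{lem-klemma2}, but now run it for a \emph{single} target set $S$ rather than maximizing over roots. Write $q = |\cascgood_{\S,b}(S_0,\ldots,S_{k-1}) \cap S|$ and fix $q$ witnessing elements $(x_1,c_1),\ldots,(x_q,c_q) \in S$, each good-cascadable with respect to $S_0,\ldots,S_{k-1}$. For each $i$, let $(\S_{i},S-(x_i,c_i),c_i')$ be a root associated with $(x_i,c_i)$; by Lemma~\ref{lem-klemma2} (and the maximal/submaximal bookkeeping of Lemmas~\ref{lem-maxsubmax}, Observations~\ref{obs1} and~\ref{obs2}) this root is again $\eta$-maximal or $\eta$-submaximal. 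The key point is that each such associated root is \emph{good}, so by the girth hypothesis $g \ge n-\beta+1$ and the argument of assertion~({\bf\ref{assert1}}), $\add(\S_i,S-(x_i,c_i),c_i')$ contains a set $R_i$ of at least $n-\beta$ elements of the form $(\phi_c(x_i),c)$, where the $\phi_c\colon \underline{S}\to B_c$ are injections supplied by Lemma~\ref{lem_injection}. As in the proof of Lemma~\ref{lem-klemma}, the $R_i$ are pairwise disjoint because the $\phi_c$ are injections, so $Q' := \bigcup_{i=1}^q R_i$ has at least $q(n-\beta)$ elements.

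Next I would locate a large portion of $Q'$ inside the natural-bijection images $\mu_i(S')$ of the \emph{particular} set $S'$ in the statement. This is where the hypotheses (i) and (ii) come in: they are exactly the size conditions under which Observations~\ref{obs1} and~\ref{obs2} force every element of $Q'$ to land in some set of the collection $\S_i$ (equivalently, in some $\mu_i(S_j)$ or $\mu_i(S)$ or $\mu_i(S')$), and simultaneously they force that, were $S'$ to contain ``too many'' of these addable elements, one could perform an exchange (via Lemma~\ref{lem-exchange}) raising the signature above $\bm{\tau}_\eta$ — a contradiction with maximality. Concretely, an element $(\phi_c(x_i),c)\in R_i$ lies in $\underline{\mu_i(S')}$ iff $\phi_c(x_i)\in \underline{S'}$ (since $\mu_i$ changes only colour data on sets other than the $S_j$'s, and $\underline{S'}$ is unchanged by $\mu_i$). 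So counting elements of $R_i$ that sit in $\mu_i(S')$ is the same as counting $c$ with $\phi_c(x_i)\in \underline{S'}$.

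The heart of the argument is then: if $|\underline{S}\cap\underline{S'}| < q - 2\beta$, derive a contradiction. The idea is that for each colour $c$ in the (at least $q - |\underline{S}\cap\underline{S'}| > 2\beta$) colours $c_1,\ldots,c_q$ whose representatives $x_i$ lie in $\underline{S}\setminus\underline{S'}$, the element $\phi_c(x_i)$ and the element currently of colour $c$ in $S'$ give, via the girth condition, a ``$\to$'' relation; with more than $2\beta$ such indices one invokes Lemma~\ref{lem-exchange} exactly as in Section~3.1 to build an RIS $S'' = S' - \{(\cdot,c_i):i\in I\} + \{(\phi_{c_i}(x_i),c_i):i\in I\}$ for some nonempty $I$, which when spliced back (deleting the corresponding elements from $S$) yields a collection with signature $\succ\bm{\tau}_\eta$, contradicting $\eta$-maximality or $\eta$-submaximality. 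Hence $|\underline{S}\cap\underline{S'}|\ge q-2\beta$. I expect the main obstacle to be the careful signature accounting in the $\eta$-submaximal cases — tracking $i^*(\S)$, $i^{**}(\S)$, and whether $\tau_{n-1}(\S)$ is $1$ or $2$ — to be sure that the exchange really does strictly increase $\bm{\tau}$ under precisely hypotheses (i) and (ii), and the ``$2\beta$'' slack (rather than $\beta$) comes from needing to discard up to $\beta$ indices on \emph{each} of the two sets $S$ and $S'$ when the respective $\underline{S}+x$ or $\underline{S'}+x$ contains a circuit.
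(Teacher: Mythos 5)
The proposal misidentifies which set the girth condition acts on, and as a result the exchange it proposes is not justified. You try to move elements of the form $(\phi_{c_i}(x_i),c_i)$ \emph{into} $S'$, where $\phi_c\colon\underline{S}\to B_c$ are the injections from Lemma~\ref{lem_injection}. But those injections certify independence of $\underline{S}-x_i+\phi_c(x_i)$, i.e.\ independence \emph{relative to $\underline{S}$}; they say nothing about $\underline{S'}$. There is no reason $\phi_{c_i}(x_i)\notin\underline{S'}$, so no fundamental circuit in $\underline{S'}+\phi_{c_i}(x_i)$ is available, and the ``$\to$'' relations you need to feed into Lemma~\ref{lem-exchange} (which must be of the form $\underline{S'}-z_j+\phi_{c_i}(x_i)$ independent) are simply not established. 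Moreover, the indices you single out are the wrong ones: you look at $x_i\in\underline{S}\setminus\underline{S'}$, whereas the relevant elements are members $(y,c_i)$ of $S'$ with $y\notin\underline{S}$. Finally, your proposed splice (replace $S'$ by a same-size $S''$ and delete elements from $S$) shrinks $S$ and leaves $|S'|$ unchanged, which does not raise the signature; hypotheses (i)/(ii) do not enter as you sketch them.

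The paper's argument runs in the opposite direction and never touches the $R_i$/$\mu_i$ machinery from Lemma~\ref{lem-klemma}. It first shows $|S'|\ge n-\beta$ (else one could enlarge $S'$, contradicting maximality), so $S'$ carries at least $q-\beta$ elements coloured among $c_1,\dots,c_q$; if $|\underline{S}\cap\underline{S'}|<q-2\beta$, at least $\beta+1$ of these, say $(y_1,c_1),\dots,(y_{\beta+1},c_{\beta+1})$, have $y_i\notin\underline{S}$. The girth condition is then applied to $\underline{S}+y_i$: since the fundamental circuit (if any) misses at most $\beta$ elements of $\underline{S}$, some $x_j$ with $j\in[\beta+1]$ is in it, giving $(y_i,c_i)\to(x_j,c_j)$. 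Lemma~\ref{lem-exchange} then yields an $I$ with $T=S-\{(x_i,c_i):i\in I\}+\{(y_i,c_i):i\in I\}$ an RIS of the same size as $S$, while $T'=S'-\{(y_i,c_i):i\in I\}$ is strictly smaller; after threading through the root associated with $(x_1,c_1)$ (which has grown $S_0$ by one), replacing $S-(x_1,c_1)$ by $T$ and $S'$ by $T'$ strictly raises the signature under precisely (i) or (ii). The $2\beta$ slack is $\beta$ (colours possibly absent from $S'$) plus $\beta$ (elements of $S'$ that might lie in $\underline{S}$), not, as you suggest, $\beta$ discards ``on each of $S$ and $S'$.'' Your instinct to reduce to Lemma~\ref{lem-exchange} is right, but you need to build the circuit inside $\underline{S}$, not $\underline{S'}$, and move mass from $S'$ into $S$.
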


\begin{proof}
Assume $Q =\cascgood(S_0, \ldots ,S_{k-1}) \cap S = \{ (x_1,c_1), (x_2,c_2), \dots ,(x_q,c_q) \}.$  We shall prove the contrapositive. Let $S' \in \S - \{ S_0, \ldots ,S_{k-1}, S \}$ where $|\underline{S} \cap \underline{S}'| < q - 2\beta.$  We shall show that neither i) nor ii) are satisfied.  Since $M$ has girth $g\ge n-\beta+1$, we have that $|S'| \ge n-\beta.$ To see this, suppose $|S'| < n-\beta.$  If for some colour $b \not\in \pi_2(S'),$
 there exists $(y,b) \in \un_b(\S)$ where $y\not\in \underline{S'},$ then $S' + (y,b)$ is seen to be an RIS.  If no such $(y,b)$ exists, then one can exchange elements in $S'$ for unused elements (as is done in transforming 
 a bad root into a good one) so as to obtain a set $S''$ where $\underline{S}'' = \underline{S}'$ and $S'' + (y,b)$ is an RIS, for some unused element $(y,b).$  In either case, replacing $S'$ by $S' + (y,b)$ in $\S$ 
 would result in a collection contradicting the maximality of $\S.$
Thus $|S'| \ge n-\beta$ and hence $S'$ has at least $q-\beta$ elements with one of the colours $c_i,\ i = 1, \dots ,q.$
Given that $|\underline{S} \cap \underline{S}'| < q - 2\beta,$ there is a subset $Q' \subseteq S'$ for which $|Q'| \ge q-\beta - (q - 2\beta -1) = \beta +1,$ $\underline{Q}' \cap \underline{S} = \emptyset,$ and
$\pi_2(Q') \subset \{ c_1, c_2, \dots ,c_q \}.$  We may assume $\{ (y_1,c_1), \dots ,(y_ {\beta+1}, c_{\beta +1}) \} \subseteq Q'.$  Since for $i = 1, \dots ,\beta +1,$ $y_i \not\in \underline{S},$ it follows that either $\underline{S} + y_i$ contains a circuit having size at least $n-\beta +1$, or $\underline{S} + y_i$ is independent.   In the former case, the circuit must contain at least one of the elements $x_i,\ i = 1, \dots ,\beta +1$ and thus $(y_i,c_i) \to (x_j, c_j),$ for some $1 \le j \le \beta +1.$  In the latter case, for $j=1, \dots ,\beta +1$, $(y_i,c_i) \to (x_j,c_j).$  It now follows by Lemma \ref{lem-exchange} that there exists $I \subseteq \{ 1,\dots ,\beta +1\}$ such that $T = S - \{ (x_i,c_i)\ \big| \ i \in I \} + \{ (y_i,c_i)\ \big| \ i \in I \}$ is an RIS.
Let $T' = S' - \{ (y_i,c_i)\ \big| \ i \in I \}$.

Without loss of generality, we may assume $1 \in I.$  Let $(\S', S - (x_1,c_1),c_1)$ be a root associated with $(x_1,c_1).$  Let $\T = \S' - \{ S', S - (x_1,c_1) \} + \{ T, T' \}.$
Suppose $\S$ is $\eta$-maximal.  Then Observation \ref{obs1} implies that, for $i = 1, \dots ,k-1,$ $S_i$ is an RB,  and $S$ is an RB as well.
Suppose $i^*(\S) = n-1.$  If $|S'| \le n-1,$ then it seen that $\tau_n(\T) = \tau_n(\S) + 1,$ contradicting the maximality of $\S.$   
On the other hand, suppose $i^*(\S) < n-1$ and $|S'| \le n-1.$  Then it seen that $\tau_n(\T) = \tau_n(\S)$ and $i^*(\T) > i^*(\S).$  Thus
$\bm{\tau}_\eta = \bm{\tau}(\S) \prec \bm{\tau}(\T),$ a contradiction. 
Thus i) does not hold.

Suppose $\S$ is $\eta$-submaximal (in which case $t_{n-1} = 0$).   Suppose $\tau_{n-1}(\S) =2.$  Then by Observation \ref{obs2}, we have $|S| \ge n-1.$   If $|S'| < n-1$, then we see that $\tau_n(\T) = t_n$ and $\tau_{n-1}(\T) = 1 > t_{n-1} =0.$  Thus $\bm{\tau}_\eta \prec \tau(\T),$ a contradiction.  Suppose $\tau_{n-1}(\S) = 1.$  Then by Observation \ref{obs2}, we have $|S| \ge i^{**}(\S).$  If $|S'| < i^{**}(\S),$ then we see that $\tau_n(\T) = t_n$ and $\bm{\tau}_\eta \prec \tau(\T);$ a contradiction.  Thus ii) does not hold.
This completes the proof.
\end{proof}

\subsection{Proof of Theorem~\ref{the_main_overlapping}}

%
For $\S \in \mathbb{S}(\bB),$ let $r^{\mathrm{good}}(\S) = \max_{(\S,S_0,b)}r(\S,S_0,b)$, where the maximum is taken over all good roots $(\S,S_0,b)$ such that
$|S_0| = i^*(\S);$ if no such roots exists for $\S$, then we define $r^{\mathrm{good}}(\S) := 0.$ 

As before, let $\eta = n-\alpha.$  We may assume that $t(\bB) < \eta.$  Among all $\eta$-maximal or $\eta$-submaximal collections $\S$, choose $\S$ so that $r^{\mathrm{good}}(\S)$ is maximum and let $r_{\max} = r^{\mathrm{good}}(\S).$  
By Lemma \ref{lem_injection}, for all $S \in \S \in \mathbb{S}(\bB)$, and for any colour $c$, we may assume there exists an injection $\phi_c: \underline{S} \rightarrow B_c$ such that for all $x \in \underline{S}$, the set $\underline{S}-x + \phi_c(x)$ is independent.  Let $S_0, \dots ,S_k$ be distinct sets in $\S$ where
\begin{itemize}
\item[i)] $|S_0| = i^*(\S).$ 
\item[ii)] $|\cascgood_{\S,b}(S_0, \dots ,S_{k-1})\cap S_k| =q$, and $k \le q.$  
\item[iii)] Subject to i) and ii), $q$ is maximum.
\end{itemize}
We see that $r_{\max} \le q.$  Let $\cascgood(S_0, \dots ,S_{k-1}) \cap S_k = \{ (x_1,c_1), \dots ,(x_q,c_q) \}.$
%
For $i=1, \dots ,q$ let $(\S_{ki}, S_k - (x_i,c_i), c_i)$ be a root associated 
with $(x_i,c_i)$ and let $(\S_{ki}, S_k - (x_i,c_i), c_i) \xrightarrow[]{\mathrm{good}} (\S_{ki}', S_{ki}', c_i').$  Observe that for $i = 1, \dots ,q,$ we have $\underline{S}_{ki}' = \underline{S}_k - x_i$ and $\S_{ki}'$ is either $\eta$-maximal or $\eta$-submaximal.   Since $(\S_{ki}', S_{ki}', c_i')$ is a good root, there is an element $(y_i,c_i') \in \un_{c_i'}(\S_{ki}')$ where $y_i \not\in \underline{S}_{ki}'.$  Since $\S_{ki}'$ is either $\eta$-maximal or $\eta$-submaximal, it follows that $S_{ki}' + (y_i, c_i')$ is not an RIS; if it was, then $(y_i,c_i') \in \add(\S_{ki}', S_{ki}',c_i'),$ contradicting Observations \ref{obs1} and \ref{obs2}.  Thus $\underline{S}_{ki}' + y_i$ contains a circuit of size at least $n-\beta +1$ (by the girth condition) and hence we see that $|\swap(\S_{ki}', S_{ki}', c_i')| \ge n-\beta.$
By the above, for each colour $c$, there is an injection $\phi_c: \underline{S}_k \rightarrow B_c$ such that for all $x \in \underline{S}_k,$ the set $\underline{S}_k - x + \phi_c(x)$ is independent. 
For each $(x,c) \in \swap(\S_{ki}',S_{ki}',c_i')$, the element $\phi_c(x_i)\in B_c$ is such that $\underline{S}_{ki}' + \phi_c(x_i)$ is independent. Thus $(\phi_c(x_i),c) \in \add(\S_{ki}',S_{ki}',c_i')$.  Let $R_i$ be the set of such addable elements.  The sets $R_i,\ i = 1, \dots ,q$ are disjoint since the functions $\phi_c$ are injections.  Thus there are at least $q(n-\beta)$ elements in $R = \bigcup_iR_i.$  For each set $S \in \S - \{ S_0, \dots ,S_k \},$ let $q_S = |R\cap S|.$

\begin{noname}
If $t(\bB) < n-\alpha-2,$ then $q(n-\beta -q^2) < \kappa n + 2\beta (n-\alpha).$\label{nona1}
\end{noname}

\begin{proof}
As previously remarked, for $i=1, \dots ,q,$ the collection $\S_{ki}'$ is either $\eta$-maximal or $\eta$-submaximal.   Since $r_{\max} = r^{\mathrm{good}}(\S)$ is maximum, we have that $r^{\mathrm{good}}(\S_{ki}') \le r_{\max} \le q.$  Let $S' \in \S_{ki}'- S_{ki}'$ where
$S' \not\in \S - \{ S_0, \dots ,S_k \};$ that is, $S'$ is a set in $\S_{ki}'$ corresponding to one of the sets $S_0, \dots, S_{k-1}.$  Since $r(\S_{ki}', S_{ki}, c_i') \le r_{\max} \le q,$  we have $|R_i \cap S'| \le q.$  Given that there are $k$ such sets $S',$ and $k \le q,$ it follows that there are at most $q^2$ elements in $R_i$ belonging to such sets $S'.$  Thus for the other $\eta -k-1$ sets in $\S_{ki}'$ (excluding $S_{ki}'$ as well), which are exactly the sets in $\S - \{ S_0, \dots ,S_k \},$ we have that $$\sum_{S'\in \S - \{ S_0, \dots ,S_k\}}|R\cap S'| \ge q(n-\beta) - q\cdot q^2 = q(n-\beta -q^2).$$
Let $S' \in \S - \{ S_0, \dots ,S_k \}$ be a fixed set.  Given that $t(\bB) < \eta -2,$ we may choose $S'$ so that:

 a) $|S'| \le n-1,$ if $\S$ is $\eta$-maximal, or
 
 b) $|S'| < n-1$ if $\S$ is $\eta$-submaximal and $\tau_{n-1}(\S) =2,$ or
 
 c) $|S'| < i^{**}(\S)$ if $\S$ is $\eta$-submaximal and $\tau_{n-1}(\S) = 1.$  
 
  By Lemma \ref{lem-qbound}, we have, for all $S \in \S - \{ S_0, \dots ,S_k,S' \},$ $|\underline{S} \cap \underline{S}'| \ge q_S - 2\beta.$ Thus we have the following
\begin{align*}\kappa n &\ge \sum_{S \in \S - \{ S_0, \dots ,S_k,S' \}}|\underline{S} \cap \underline{S'}|\\ &\ge  \sum_{S \in \S - \{ S_0, \dots ,S_k,S' \}}(q_S - 2\beta) > \sum_{S \in \S - \{ S_0, \dots ,S_k,S' \}}q_S\  - \ 2\beta (n-\alpha -k)\\ &> \sum_{S\in \S - \{ S_0, \dots ,S_k, S'\}}|R\cap S|\  - \ 2\beta(n-\alpha -q)\\ &\ge q(n-\beta -q^2) - 2\beta(n-\alpha -q).
\end{align*} 
Thus $q(n-\beta-q^2) < \kappa n + 2\beta(n-\alpha).$
\end{proof}

\begin{noname}
If $\displaystyle{
\sqrt{\alpha - \beta} \ge \frac {\kappa \cdot n}{n-\alpha} + 2\beta +1,}$
then $t(\bB) \geq n- \alpha -2$.\label{nona2}
\end{noname}

\begin{proof}
Assuming $\alpha > \beta$, we have $\alpha \ge (\sqrt{\alpha -\beta}-1)\sqrt{\alpha-\beta} +\beta.$  Let $q = \sqrt{\alpha - \beta} -1.$  Suppose $t(\bB) < n- \alpha -2.$ Then it follows by Lemma \ref{lem-klemma2} that there is a collection $\S \in \mathbb{S}_\eta(\bB)$, which is $\eta$-maximal or $\eta$-submaximal, for which there exists a root $(\S,S_0,b)$ and distinct sets $S_0,S_1, \dots ,S_l \in \S$, $\ell \le q$, such that $|S_\ell \cap \cascgood(S_0, \dots ,S_{\ell -1})| \ge q$. By ({\bf \ref{nona1}}), it follows that $q(n-\beta -q^2) < \kappa n + 2\beta (n-\alpha).$
However, the reader can check that when $\sqrt{\alpha -\beta} \ge \frac {\kappa n}{n-\alpha} + 2\beta +1,$ we have 
$$(\sqrt{\alpha - \beta} -1)(n-\beta - (\sqrt{\alpha -\beta} -1)^2) \ge \kappa n + 2\beta(n-\alpha).$$  This yields a contradiction.  Thus $t(\bB) \ge n- \alpha -2.$
\end{proof}

\begin{noname}
$t(\bB) \ge n - (2\kappa(n) + 2\beta(n) +1)^2 - \beta(n) -2$, for $n > 2((2\kappa(n) + 2\beta(n) +1)^2 + \beta(n)).$\label{nona3}
\end{noname}

\begin{proof}
Define $\alpha: \mathbb{Z}_+ \rightarrow \mathbb{Z}_+$ by $\alpha(n) = \left( 2\kappa(n) + 2\beta(n) + 1\right)^2 + \beta(n)$. When $n > 2\alpha,$ we have $\frac n\alpha -1 >1$ and thus we have
\begin{align*}
\alpha &\geq \left( \kappa + \frac{\kappa}{\left(\frac{n}{\alpha}\right)-1} + 2\beta + 1 \right)^2 + \beta \\
&= \left( \frac{\kappa \cdot n}{n - \alpha} + 2\beta + 1\right)^2+\beta,
\end{align*}
so that
\[
\sqrt{\alpha - \beta} \geq \frac{\kappa \cdot n}{n - \alpha} + 2\beta + 1.
\]
Thus it follows by ({\bf \ref{nona2}}) that $$t(\bB) \ge n-\alpha -2 = n - (2\kappa + 2\beta +1)^2 - \beta -2.$$
\end{proof}

\bibliographystyle{abbrv}
\bibliography{rota2}

\end{document}